\theoremstyle{definition}
\newtheorem{exmp}{Example}[section]
				\newtheorem{thm}{Theorem}[section]
				\newcommand{\enter}{\bigskip}
\begin{document}
\thispagestyle{empty}
\author{Sanjiv Kumar Bariwal\footnote{{\it{${}$ Corresponding author. \, Email address:}} p20190043@pilani.bits-pilani.ac.in},  Rajesh Kumar\\
\footnotesize Department of Mathematics, Birla Institute of Technology and Science, Pilani,\\ \small{ Pilani-333031, Rajasthan, India}\\
}

\title{{Non-linear collision-induced breakage equation: approximate solution and error estimation}}	
									
\maketitle

\begin{quote}
{\small {\em\bf Abstract}}:	This article aims to provide approximate solutions for the non-linear collision-induced breakage equation using two different semi-analytical schemes, i.e., variational iteration method (VIM) and optimized decomposition method (ODM). The study also includes the detailed convergence analysis and error estimation for ODM in the case of product collisional ($K(\epsilon,\rho)=\epsilon\rho$) and  breakage ($b(\epsilon,\rho,\sigma)=\frac{2}{\rho}$) kernels with an exponential decay initial condition. By contrasting estimated concentration function and moments with exact solutions, the novelty of the suggested approaches is presented considering three numerical examples. Interestingly, in one case, VIM provides a closed-form solution, however, finite term series solutions obtained via both schemes supply a great approximation for the concentration function and moments. 
	\end{quote}
\textbf{Keywords}: Non-linear collision breakage equation, VIM, ODM, Series solution, Convergence, Error.\\
\textbf{Mathematics Subject Classification}: 45K05, 47B01, 65R20
\section{Introduction}
The  collision-induced breakage (fragmentation) equation which is non-linear in nature, is used in the modeling of planet formation, aerosel, miling and crushing processes \cite{brilliantov2015size,wei2019graphite,chen2020collision} to explain the mechanics of a massive number of particles splitting apart as a consequence of collisions. It is referred to as non-linear breakage due to the interaction among particles. The recognition  of a particle is defined by its size or volume. 
Cheng and Redner \cite{cheng1988scaling} used the following integro-partial differential equation to derive the collision-induced breakage equation (CBE).  It illustrates the time progession of  concentration function  $f(\varsigma,\epsilon)\geq 0$ of particles of size $\epsilon \in ]0,\infty[$ at time $\varsigma\geq 0$ and is defined by
\begin{align}\label{maineq}
\frac{\partial{f(\varsigma,\epsilon)}}{\partial \varsigma}=  \int_0^\infty\int_{\epsilon}^{\infty} K(\rho,\sigma)b(\epsilon,\rho,\sigma)f(\varsigma,\rho)f(\varsigma,\sigma)\,d\rho\,d\sigma -\int_{0}^{\infty}K(\epsilon,\rho)f(\varsigma,\epsilon)f(\varsigma,\rho)\,d\rho
\end{align}
with the given initial data 
\begin{align}\label{initial}
f(0,\epsilon)\ \ = \ \ f^{in}(\epsilon) \geq 0, \ \ \ \epsilon \in ]0,\infty[.
\end{align}
Without losing generality, $\epsilon$ and $\varsigma$ are viewed as dimensionless quantities. In equation (\ref{maineq}), the collision kernel $K(\epsilon,\rho)$ represents the collision rate for breakage events involving  $\epsilon$ and $\rho$ sizes particles and holds symmetric property with respective sizes. The term $b(\epsilon, \rho, \sigma)$ corresponds to the breakage distribution function, which indicates the rate at which the splitting of $\rho$ volume particle produces $\epsilon$ volume particles due to collision with $\sigma$.
The  function $b$ holds
$$b(\epsilon, \rho, \sigma)\neq 0\,\,\, \text{for} \hspace{0.4cm} \epsilon \in (0,\rho)\,\,\, \text{and} \hspace{0.4cm} b(\epsilon, \rho, \sigma)=0 \,\,\,\text{for} \hspace{0.4cm} \epsilon> \rho$$
as well as satisfies
$$\int_{0}^{\rho}\epsilon b(\epsilon, \rho, \sigma)\,d\epsilon=\rho, \,\, \forall\, (\rho,\sigma) \in {]0,\infty[}^{2}.$$ The above integral property concerns about the mass conservation of particles. The volume of daughter particles produced by the splitting of $\rho$ particle is equivalent to the parent particle $\rho$. The first component in the right-hand side of  Eq.(\ref{maineq}), known as the birth term, describes the addition of $\epsilon$ size particles as the consequence of collisional fragmentation between $\rho$ and $\sigma$ sizes particles. The second term is designated as death part and explains the destruction of $\epsilon$ size particles by collision with $\rho$ size particles.\enter

 Integral features like moments are essential characteristics of the concentration function $f(\varsigma, \epsilon)$ that must also be specified. The subsequent equation identifies the $j$th moment of the solution as
\begin{align}\label{momemt}
M_{j}(\varsigma)=\int_{0}^{\infty}{\epsilon}^{j}f(\varsigma, \epsilon)\,d\epsilon, \,\, j=0,1,2,....
\end{align}
The zeroth ($j = 0$) and the first ($j = 1$) moments are proportional to the system's total number of particles and volume, respectively. Additionally, the second moment ($j=2$)  articulates the energy dissipated by the system. It is obvious, due to breakage  event, $M _0(\varsigma)$ grows as time passes, while $M _2(\varsigma)$ decreases due to the development of small particles. Nevertheless, the total volume $M_1(\varsigma)$ will be maintained fixed for specific kernels throughout the collision-breaking procedure \cite{kostoglou2000study}.

\subsection{Literature review and motivation}
In the literature, we have seen  theoretical and numerical results related to the  aggregation, linear  breakage and aggregation-breakage processes in the population dynamics such as \cite{ziff1991new,dubovskii1992exact,barik2020mass,kostoglou2009sectional,kumar2015development,Kumar2014convergence,attarakih2009solution,ahmed2013stabilized,bariwalconvergence} and references therein. The CBE \cite{cheng1990kinetics,kostoglou2000study} differs from linear breakage, in which particles break spontaneously under the influence of external factors. The non-linear CBE lacks general analytic solutions, although its mathematical representation is associated with the linear equation as in \cite{cheng1990kinetics,kostoglou2000study,ernst2007nonlinear,das2020approximate,giri2021existence,giri2021weak}. Few analytical solutions are provided in reference \cite{kostoglou2000study,ernst2007nonlinear,ziff1985kinetics} for constant and multiple collision kernels and self-similar solutions are also reported in \cite{kostoglou2000study,mcgrady1987shattering}. Due to limitation of the analytical solutions, recently finite volume (FV) and finite element methods were implemented to solve the model. The authors devised a conservative FV approximation to forecast $M_{0}(\varsigma)$ and $M_{1}(\varsigma)$ in \cite{das2020approximate}. In addition, the constant-driven Monte Carlo technique was employed to validate the FV simulations. Further, in \cite{paul2023moments}, consistency and convergence analysis of discretized FV method under twice continuously differentiable kernels were proved by demonstrating Lipschitz continuity of the numerical fluxes. The authors in \cite{lombart2022fragmentation} have introduced a discontinuous Galerkin algorithm that precisely resolves the non-linear CBE on a reduced mass grid in order to account for the dust particles.\\

Nonetheless, numerical techniques have become a common method for analysing and addressing a wide range of difficult non-linear issues. Such schemes necessitate physical assumptions such as variable discretization, a set of basis functions, linearization, etc. in order to numerically approximate the solution. Today, a lot of authors have suggested alternate strategies based on iterative methodologies to obtain the solution in series forms in order to avoid these restrictions. The so-called semi-analytical procedures enable us to obtain the results analytically. To understand the beauty and novelty of such algorithms, readers are referred to homotopy perturbation method (HPM) \cite{he2003homotopy,el2005application,kaur2019analytical}, Adomian decomposition method (ADM) \cite{jiao2002aftertreatment,singh2015adomian}, VIM \cite{he1997new,abdou2005variational,wazwaz2007comparison,odibat2010study,hasseine2015analytical,hasseine2017semi}, and ODM \cite{odibat2020optimized,kaushik2023novel} which deal with the solutions for ODEs/PDEs and integro-partial differential equations including aggregation and linear breakage equations. Authors in \cite{kaur2019analytical} explained HPM for finding analytical approximated solutions for aggregation problem considering constant, sum and product kernels as well as for linear breakage model by taking linear and quadratic selection rates. The authors demonstrated convergence analysis for a particular case of aggregation and breakage kernels and discussed the implementations of ADM technique on the aggregation and breakage equations, see \cite{singh2015adomian}. To obtain the series solution, \cite{hasseine2015analytical} described the framework of ADM and VIM for the linear breakage equation in batch and continuous flow systems with assumed functional forms of breakage frequencies and daughter particle distributions.\\

In \cite{hasseine2017semi}, ADM, HPM, and VIM were applied to solve the pure aggregation and pure breakage equations. Series solutions were then compared with the analytical results for aggregation with product kernel $\epsilon\rho$, while selection rate ${\epsilon}^{k}$ and breakage kernel $\frac{k{\epsilon}^{k-2}}{\rho}$ were used in the breakage case. An exponential decay initial condition $e^{-\epsilon}$ was considered into account in each case. Additionally, it has been discovered that VIM delivers more accurate approximating results than ADM. Very recently, the authors in \cite{kaushik2023novel} demonstrated that the ODM solutions for non-linear aggregation equation with constant, sum and product aggregation kernels, are far superior to the ADM results \cite{singh2015adomian} and also exhibit fast convergence. According to the literature survey \cite{hasseine2017semi, odibat2020optimized, kaushik2023novel}, it is found that VIM and ODM are more suitable and efficient for solving non-linear ODEs/PDEs and integro PDEs including aggregation and breakage models. Since, it is not easy to compute the exact solution for non-linear CBE (\ref{maineq}-\ref{initial}) and as per our knowledge, there is no literature available on semi-analytical techniques for this model. Our first aim of this article is to implement VIM and ODM for the said equation. Moving further, theoretical convergence analysis and error estimation are studied to justify the obtained results. For the numerical validation, three examples are selected and exact solutions for concentration and moments are compared with the approximated results. It is observed that VIM enjoys better approximation over ODM in every instance.\\

This article is framed as follows: Section \ref{analyticalmethods} contains information on the preliminary steps for VIM and ODM, as well as their applications on CBE (\ref{maineq}-\ref{initial}). The theoretical results on convergence and error analysis are presented in Section \ref{convergenceanalysis}. Section \ref{numericaldiscussion} yields the numerical implementation and comparison between ODM, VIM, and exact solutions by providing the error graphs and tables. Finally, Section \ref{conclusions} provides the conclusions.
\section{Semi-analytical Approach} \label{analyticalmethods}
\subsection{Variational iteration method}
VIM \cite{odibat2010study} is used to solve variety of non-linear  ODEs/PDEs without linearization and small perturbations. This approach  is feasible and  effective  for dealing with the non-linear problems. To understand the general idea of the VIM for any ODEs/PDEs, let us consider
\begin{align}\label{vim1}
\mathcal{L}(f(\varsigma,\epsilon))+M(f(\varsigma,\epsilon))=0,
\end{align}
where $\mathcal{L}=\frac{\partial}{\partial \varsigma}$ and $M$ are linear and non-linear operators, respectively. According to VIM, we get the correction functional for Eq.(\ref{vim1}) as
\begin{align}\label{vim2}
f_{k+1}(\varsigma,\epsilon)=f_k(\varsigma,\epsilon)+\int_{0}^{\varsigma}\left[\lambda\Big(\mathcal{L}(f_k(\tau,\epsilon))+M(\tilde{f}_k(\tau,\epsilon))\Big)\right] d \tau.
\end{align}
Eq.(\ref{vim2}) has a general Lagrange multiplier $\lambda=-1$ that can be found optimally via variational theory, and  $\tilde{f}_k$ is a
restricted variation which means $\delta \tilde{f}_k=0,$ see \cite{odibat2010study} for more details. This  provides the following iteration formula
\begin{align}\label{vim3}
f_{k+1}(\varsigma,\epsilon)=f_k(\varsigma,\epsilon)-\int_{0}^{\varsigma}\left[\Big(\mathcal{L}(f_k(\tau,\epsilon))+M(f_k(\tau,\epsilon))\Big)\right] d \tau.
\end{align}
Let us construct the operator $A[f]$ as follows:
\begin{align}\label{vim4}
A[f]=\int_{0}^{\varsigma}\left[-\Big(\mathcal{L}f(\tau,\epsilon)+Mf(\tau,\epsilon)\Big)\right] d \tau,
\end{align}
then the solution of problem (\ref{vim1}) is written in series form as $f(\varsigma,\epsilon)=\sum_{k=0}^{\infty}f_k(\varsigma,\epsilon)$ where the  components $f_k(\varsigma,\epsilon)$ for $ k=0,1,2,..., $ are defined as
 \begin{equation}\label{solutionterm1}
	  \left.\begin{aligned}
	         f_0(\varsigma,\epsilon)&=f^{in}(\epsilon),\\
	          f_1(\varsigma,\epsilon)&=A[f_0(\varsigma,\epsilon)],\\
	            f_{k+1}(\varsigma,\epsilon)&=A[f_0(\varsigma,\epsilon)+f_1(\varsigma,\epsilon)+...+ f_k(\varsigma,\epsilon)], \,\, k\geq 1.
	        \end{aligned}
	  \right\}
	 	 \end{equation}
An $n$th-order truncated series solution of the result $f(\varsigma,\epsilon)$ is considered as $\varphi_n(\varsigma,\epsilon)$ and is denoted by
	 \begin{align}\label{truncated}
	 \varphi_n(\varsigma,\epsilon):=\sum_{k=0}^{n}f_ k(\varsigma,\epsilon).	
	 \end{align} 
	 
	  In the following, the general formulation for the $n$-term series solution for the CBE (\ref{maineq}-\ref{initial}) is described. 
	  \subsubsection{VIM for CBE}
	 	Define the non-linear operator $M$ as follows
	 	 \begin{align}\label{cvim1}
	 	 Mf(\varsigma,\epsilon)=-\biggl(\int_0^\infty\int_{\epsilon}^{\infty} K(\rho,\sigma)b(\epsilon,\rho,\sigma)f(\varsigma,\rho)f(\varsigma,\sigma)\,d\rho\,d\sigma -\int_{0}^{\infty}K(\epsilon,\rho)f(\varsigma,\epsilon)f(\varsigma,\rho)\,d\rho\biggl).
	 	 \end{align}
	 	 Using the Eqs.(\ref{vim3}), (\ref{cvim1}) and $\lambda=-1$, we get the following  iteration formula to compute the concentration function 
\begin{align}\label{cvim2}
	 	 f_{k+1}(\varsigma,\epsilon)=& f_k(\varsigma,\epsilon)+\int_{0}^{\varsigma}\biggl(-\frac{\partial f_k(\tau,\epsilon)}{\partial \tau}+\int_0^{\infty}\int_{\epsilon}^{\infty} K(\rho,\sigma)b(\epsilon,\rho,\sigma)f_k(\tau,\rho)f_k(\tau,\sigma)\,d\rho\,d\sigma \nonumber \\ \nonumber 
	 	 &-\int_{0}^{\infty}K(\epsilon,\rho)f_k(\tau,\epsilon)f_k(\tau,\rho)\,d\rho\biggl) d\tau\nonumber \\
	 =	&f_k(\varsigma,\epsilon)+A[f].
	 	 \end{align}
	 	Using the above, the solution components are derived from the Eq.(\ref{solutionterm1}) and hence, an approximate series solution of $n$th-order is formed using $n$+1 components. We will simplify the details for various specific kernels in the numerical section.
\subsection{Optimized decomposition method}	
This section contains a detailed explanation of ODM \cite{odibat2020optimized} for analytically solving Eqs.(\ref{maineq}-\ref{initial}). Before going into this, let us address the essential concept of the proposed strategy for solving the following non-linear PDE
\begin{align}{\label{eq.3}}
\frac{\partial}{\partial \varsigma}f(\varsigma,\epsilon)=M[f(\varsigma,\epsilon)],
\end{align}
where $M$ is a non-linear operator of $f$ and $\mathcal{L}=\frac{\partial}{\partial \varsigma}.$ Applying inverse operator of $\mathcal{L}$ on (\ref{eq.3}) leads to the  following  equation
\begin{align}\label{4}
f(\varsigma,\epsilon)=f^{in}(\epsilon)+{\mathcal{L}}^{-1}\{M[f(\varsigma,\epsilon)]\}.
\end{align}
The fundamental concept of ODM lies in the linear approximation of non-linear term by a first-order Taylor series expansion at $\varsigma=0$. Thus, the obtained approximation is 
\begin{align}\label{5}
\frac{\partial}{\partial \varsigma}f(\varsigma,\epsilon)-M[f(\varsigma,\epsilon)] \approx \frac{\partial}{\partial \varsigma}f(\varsigma,\epsilon)-C(\epsilon)f,
\end{align}	
	where
	\begin{align}\label{cvalue}
	C(\epsilon)=\frac{\partial M}{\partial f}\Bigg\vert_{\varsigma=0}.
	\end{align}	
	 The approximation mentioned above yields, a linear operator $T$ specified by
	\begin{align}\label{Lvalue}
	T[f(\varsigma,\epsilon)]=M[f(\varsigma,\epsilon)]-C(\epsilon)f(\varsigma,\epsilon).
	\end{align}
	 The solution is now constructed as an infinite series, by following \cite{odibat2020optimized}, and  is given by
	 \begin{align}\label{fvalue}
	 f(\varsigma,\epsilon)=\sum_{k=0}^{\infty}f_{k}(\varsigma,\epsilon),
	 \end{align}
	 where the components are listed below 
	 \begin{equation}\label{solutionterm2}
	  \left.\begin{aligned}
	         f_0(\varsigma,\epsilon)&=f^{in}(\epsilon),\\
	          f_1(\varsigma,\epsilon)&={\mathcal{L}}^{-1}[Q_0(\varsigma,\epsilon)],\\
	           f_2(\varsigma,\epsilon)&={\mathcal{L}}^{-1}[Q_1(\varsigma,\epsilon)-C(\epsilon)f_1(\varsigma,\epsilon)],\\
	            f_{k+1}(\varsigma,\epsilon)&={\mathcal{L}}^{-1}[Q_k(\varsigma,\epsilon)-C(\epsilon)(f_k(\varsigma,\epsilon)-f_{k-1}(\varsigma,\epsilon))], \,\, k\geq 2,
	        \end{aligned}
	  \right\}
	 	 \end{equation}
	 	 with
	 	\begin{align}\label{Qvalue}
	 	Q_k(\varsigma,\epsilon)=\frac{1}{k!}\frac{d^k}{d{\theta}^k}\left[M\Big(\sum_{i=0}^{k}{\theta}^{i}f_i(\varsigma,\epsilon)\Big)\right],
	 	 \end{align}
	 	and 
	 	 \begin{align}
	 	 M \Big(\sum_{k=0}^{\infty}f_k(\varsigma,\epsilon)\Big)=\sum_{k=0}^{\infty}	Q_k(\varsigma,\epsilon).
	 	 \end{align}
	
\subsubsection{ODM for CBE}
	To obtain a ODM formulation for Eqs.(\ref{maineq}-\ref{initial}), consider the non-linear operator $ M[f(\varsigma,\epsilon)]$ as 
	 \begin{align}\label{codm1}
	 M[f(\varsigma,\epsilon)]=\int_0^\infty\int_{\epsilon}^{\infty} K(\rho,\sigma)b(\epsilon,\rho,\sigma)f(\varsigma,\rho)f(\varsigma,\sigma)\,d\rho\,d\sigma -\int_{0}^{\infty}K(\epsilon,\rho)f(\varsigma,\epsilon)f(\varsigma,\rho)\,d\rho,
	 \end{align}
	 and differentiating it with respect to $f(\varsigma,\epsilon)$, we have at $\varsigma=0,$
	 \begin{align}\label{codm2}
	 C(\epsilon)=-\int_{0}^{\infty}K(\epsilon,\rho)f(0,\rho)\,d\rho.
	 \end{align}
	The linear operator $T$ in Eq.(\ref{Lvalue})  expresses the following equation after utilizing the Eqs.(\ref{codm1}) and (\ref{codm2})
	\begin{align}
	 T[f(\varsigma,\epsilon)]=&\int_0^\infty\int_{\epsilon}^{\infty} K(\rho,\sigma)b(\epsilon,\rho,\sigma)f(\varsigma,\rho)f(\varsigma,\sigma)\,d\rho\,d\sigma -\int_{0}^{\infty}K(\epsilon,\rho)f(\varsigma,\epsilon)f(\varsigma,\rho)\,d\rho \nonumber\\
	 &+f(\varsigma,\epsilon)\int_{0}^{\infty}K(\epsilon,\rho)f(0,\rho)\,d\rho.
	\end{align}
	Using Eq.(\ref{codm1}) and setting $k = 0$ in Eq.(\ref{Qvalue}), the term $Q_0$ is expressed as
\begin{align*}
Q_0(\varsigma,\epsilon)=\int_0^\infty\int_{\epsilon}^{\infty} K(\rho,\sigma)b(\epsilon,\rho,\sigma)f_0(\varsigma,\rho)f_0(\varsigma,\sigma)\,d\rho\,d\sigma -\int_{0}^{\infty}K(\epsilon,\rho)f_0(\varsigma,\epsilon)f_0(\varsigma,\rho)\,d\rho,
\end{align*}
therefore,
 \begin{align}\label{f1}
 f_1(\varsigma,\epsilon)={\mathcal{L}}^{-1}\Big(\int_0^\infty\int_{\epsilon}^{\infty} K(\rho,\sigma)b(\epsilon,\rho,\sigma)f_0(\varsigma,\rho)f_0(\varsigma,\sigma)\,d\rho\,d\sigma -\int_{0}^{\infty}K(\epsilon,\rho)f_0(\varsigma,\epsilon)f_0(\varsigma,\rho)\,d\rho\Big).
 \end{align}
For $k=1$, Eqs.(\ref{Qvalue}) and  (\ref{codm1}) yield
\begin{align*}
Q_1(\varsigma,\epsilon)=\int_0^\infty\int_{\epsilon}^{\infty} K(\rho,\sigma)b(\epsilon,\rho,\sigma)\Big(f_0(\varsigma,\rho)f_1(\varsigma,\sigma)+f_1(\varsigma,\rho)f_0(\varsigma,\sigma)\Big)\,d\rho\,d\sigma\\ -\int_{0}^{\infty}K(\epsilon,\rho)\Big(f_0(\varsigma,\epsilon)f_1(\varsigma,\rho)+f_1(\varsigma,\epsilon)f_0(\varsigma,\rho)\Big)\,d\rho,
\end{align*}
and hence, with assistance of Eq.(\ref{codm2}), it provides
\begin{align}\label{f2}
 f_2(\varsigma,\epsilon)=&{\mathcal{L}}^{-1}\biggl(\int_0^\infty\int_{\epsilon}^{\infty} K(\rho,\sigma)b(\epsilon,\rho,\sigma)\Big(f_0(\varsigma,\rho)f_1(\varsigma,\sigma)+f_1(\varsigma,\rho)f_0(\varsigma,\sigma)\Big)\,d\rho\,d\sigma \nonumber\\ 
  &-\int_{0}^{\infty}K(\epsilon,\rho)\Big(f_0(\varsigma,\epsilon)f_1(\varsigma,\rho)+f_1(\varsigma,\epsilon)f_0(\varsigma,\rho)\Big)\,d\rho\nonumber \\ 
    &+f_1(\varsigma,\epsilon)\int_{0}^{\infty}K(\epsilon,\rho)f(0,\rho)\,d\rho\biggl).
\end{align}
For $k\geq 2$ and only when $i+j=k$, we have
\begin{align*}
Q_k(\varsigma,\epsilon)=&\int_0^\infty\int_{\epsilon}^{\infty} K(\rho,\sigma)b(\epsilon,\rho,\sigma)\Big(\sum_{i=0}^{k}f_i(\varsigma,\rho)\sum_{j=0}^{k}f_j(\varsigma,\sigma)\Big)\,d\rho\,d\sigma\\  &-\int_{0}^{\infty}K(\epsilon,\rho)\Big(\sum_{i=0}^{k}f_i(\varsigma,\epsilon)\sum_{j=0}^{k}f_j(\varsigma,\rho)\Big)\,d\rho,
\end{align*}
and 
\begin{align}\label{fk}
 f_{k+1}(\varsigma,\epsilon)=&{\mathcal{L}}^{-1}\biggl(\int_0^\infty\int_{\epsilon}^{\infty} K(\rho,\sigma)b(\epsilon,\rho,\sigma)\Big(\sum_{i=0}^{k}f_i(\varsigma,\rho)\sum_{j=0}^{k}f_j(\varsigma,\sigma)\Big)\,d\rho\,d\sigma\nonumber\\  &-\int_{0}^{\infty}K(\epsilon,\rho)\Big(\sum_{i=0}^{k}f_i(\varsigma,\epsilon)\sum_{j=0}^{k}f_j(\varsigma,\rho)\Big)\,d\rho
    -C(\epsilon)[f_k(\varsigma,\epsilon)-f_{k-1}(\varsigma,\epsilon)]\biggl).
\end{align}
Hence, Eq.(\ref{solutionterm2}) leads  to  the $n$-term series solution of Eqs.(\ref{maineq}-\ref{initial}) as
\begin{align}\label{truncatedsol}
\psi_n(\varsigma,\epsilon):=\sum_{k=0}^{n} f_{k}(\varsigma,\epsilon)&=f^{in}(\epsilon)+{\mathcal{L}}^{-1}\Big(\sum_{k=1}^{n}Q_{k-1}(\varsigma,\epsilon)-C(\epsilon)f_{n-1}(\varsigma,\epsilon)\Big) \nonumber \\
&= f^{in}(\epsilon)+{\mathcal{L}}^{-1}\Big(M(\psi_{n-1}(\varsigma,\epsilon))-C(\epsilon)f_{n-1}(\varsigma,\epsilon)\Big).
\end{align}	
	\section{Convergence Analysis}\label{convergenceanalysis}
	This section has discussion about the convergence of VIM and ODM solutions towards the precise ones including the error estimates for the finite term approximated solutions. The following results guarantee the convergence of VIM and provide the worst upper bound for error considering $n$-term series solution.
	
	\begin{thm}\label{conthm1}
	Let the operator $A[f],$ mentioned in (\ref{vim4}), be defined on a Hilbert space $D$ to $D$. The series solution $f(\varsigma,\epsilon)=\sum_{k=0}^{\infty}f_k(\varsigma,\epsilon)$ converges, if $$ \|A[ f_{0}(\varsigma,\epsilon)+f_{1}(\varsigma,\epsilon)+ \cdots +f_{k+1}(\varsigma,\epsilon)]\| \leq \alpha \|A[ f_{0}(\varsigma,\epsilon)+f_{1}(\varsigma,\epsilon)+ \cdots+f_{k}(\varsigma,\epsilon) ]\|,$$ $i.e.,  \|f_{k+1}(\varsigma,\epsilon)\| \leq \alpha \|f_{k}(\varsigma,\epsilon)\|$, where $0< \alpha < 1$ and $ \forall k \in \{0\} \cup \mathbb{N} $.
	\end{thm}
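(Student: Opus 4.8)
The plan is to establish convergence by showing that the sequence of partial sums is Cauchy in the Hilbert space $D$ and then invoking completeness.

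First I would reconcile the two forms of the hypothesis by means of the VIM recursion (\ref{solutionterm1}). Since $f_1(\varsigma,\epsilon)=A[f_0(\varsigma,\epsilon)]$ and $f_{k+1}(\varsigma,\epsilon)=A[f_0+f_1+\cdots+f_k]$ for $k\ge 1$, the assumed bound $\|A[f_0+\cdots+f_{k+1}]\|\le\alpha\|A[f_0+\cdots+f_k]\|$ is literally the statement $\|f_{k+2}\|\le\alpha\|f_{k+1}\|$; combined with the $k=0$ instance $\|f_1\|\le\alpha\|f_0\|$, this yields $\|f_{k+1}(\varsigma,\epsilon)\|\le\alpha\|f_k(\varsigma,\epsilon)\|$ for all $k\in\{0\}\cup\mathbb{N}$. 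Because $A$ maps $D$ into $D$ and $f_0=f^{in}\in D$, each component $f_k$ lies in $D$, so these norms are well defined.

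Next, an immediate induction on $k$ gives $\|f_k(\varsigma,\epsilon)\|\le\alpha^{k}\|f_0(\varsigma,\epsilon)\|$ for every $k$. Writing $S_n:=\varphi_n=\sum_{k=0}^{n}f_k$, for any $m>n$ the triangle inequality and a geometric-series estimate give
\[
\|S_m-S_n\|=\Big\|\sum_{k=n+1}^{m}f_k\Big\|\le\sum_{k=n+1}^{m}\|f_k\|\le\|f_0\|\sum_{k=n+1}^{m}\alpha^{k}\le\frac{\alpha^{n+1}}{1-\alpha}\,\|f_0\|.
\]
Since $0<\alpha<1$, the right-hand side tends to $0$ as $n\to\infty$, so $(S_n)$ is Cauchy; completeness of $D$ then yields a limit $f\in D$, i.e. the series $f(\varsigma,\epsilon)=\sum_{k=0}^{\infty}f_k(\varsigma,\epsilon)$ converges. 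Letting $m\to\infty$ in the same chain moreover produces the a posteriori error estimate $\|f-\varphi_n\|\le\frac{\alpha^{n+1}}{1-\alpha}\|f_0\|$, the ``worst upper bound'' referred to in the surrounding text (presumably recorded as a corollary).

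There is essentially no deep obstacle: the whole argument is the classical geometric-majorant/completeness proof familiar from Adomian- and VIM-type series. The only points needing care are the index bookkeeping that matches the $A[\cdot]$-form of the contraction hypothesis to the recursion (\ref{solutionterm1}) (in particular the base case), and making explicit that all iterates stay in $D$ so the norm estimates are legitimate; any statement about convergence of the associated moment series would have to be argued separately via boundedness of the moment functionals rather than from this theorem.
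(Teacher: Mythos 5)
Your proof is correct and is essentially the argument the paper relies on: the paper does not prove this theorem itself but cites Odibat's work, where exactly this geometric-majorant estimate $\|f_k\|\le\alpha^k\|f_0\|$, the Cauchy-sequence bound on partial sums, and completeness of the Hilbert space are used, yielding in passing the error bound of Theorem \ref{conthm2}. Your remark on the index bookkeeping (the $A[\cdot]$-form of the hypothesis literally gives $\|f_{k+2}\|\le\alpha\|f_{k+1}\|$, with the base case $\|f_1\|\le\alpha\|f_0\|$ supplied by the ``i.e.'' clause) is a fair and harmless clarification of the statement as written.
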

	\begin{proof}
	The proof of this result is explained by Z.M. Odibat in \cite{odibat2010study}[see, Theorem  \ref{conthm1}].
	\end{proof}
	\begin{thm}\label{conthm2}
	Let the series solution $\sum_{k=0}^{\infty}f_k(\varsigma,\epsilon)$ converges to the exact solution $f(\varsigma,\epsilon),$ then the truncated solution $\varphi_n$ of $f(\varsigma,\epsilon)$ has the maximum error bound 
	\begin{align}
	\|f-\varphi_n\| \leq \frac{1}{1-\alpha}{\alpha}^{1+n}\|f^{in}\|,\quad \text{with} \,\, 0< \alpha < 1.
	\end{align}
	\end{thm}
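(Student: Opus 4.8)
The plan is to bound the truncation error by the tail of the series and then sum a geometric series using the ratio estimate supplied by Theorem \ref{conthm1}. Since by hypothesis $\sum_{k=0}^{\infty}f_k(\varsigma,\epsilon)$ converges to $f(\varsigma,\epsilon)$ and $\varphi_n=\sum_{k=0}^{n}f_k(\varsigma,\epsilon)$, I would first write $f-\varphi_n=\sum_{k=n+1}^{\infty}f_k$, so that by the triangle inequality in $D$ one gets $\|f-\varphi_n\|\leq\sum_{k=n+1}^{\infty}\|f_k\|$.

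Next I would iterate the contraction-type inequality $\|f_{k+1}(\varsigma,\epsilon)\|\leq\alpha\|f_k(\varsigma,\epsilon)\|$ furnished by Theorem \ref{conthm1}. A straightforward induction on $k$ gives $\|f_k\|\leq\alpha^{k}\|f_0\|$ for every $k\geq 0$; recalling from (\ref{solutionterm1}) that $f_0(\varsigma,\epsilon)=f^{in}(\epsilon)$, this reads $\|f_k\|\leq\alpha^{k}\|f^{in}\|$.

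Substituting this into the tail bound and using $0<\alpha<1$ to evaluate the geometric series,
\begin{align*}
\|f-\varphi_n\|\leq\sum_{k=n+1}^{\infty}\alpha^{k}\|f^{in}\|=\alpha^{1+n}\|f^{in}\|\sum_{j=0}^{\infty}\alpha^{j}=\frac{1}{1-\alpha}\,\alpha^{1+n}\|f^{in}\|,
\end{align*}
which is exactly the claimed estimate.

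The argument is essentially routine, so I do not expect a genuine obstacle; the only points meriting care are (i) that the identity $f-\varphi_n=\sum_{k\geq n+1}f_k$ combined with the triangle inequality for an infinite sum is legitimate, which is guaranteed because the geometric majorant $\sum_k\alpha^{k}\|f^{in}\|$ shows the series converges absolutely in the norm of $D$, and (ii) that the ratio constant $\alpha$ inherited from Theorem \ref{conthm1} indeed satisfies $\alpha<1$, which is precisely the standing hypothesis there and is what makes $\sum_{j\geq 0}\alpha^{j}=1/(1-\alpha)$ finite. This mirrors the classical error estimates known for ADM/VIM-type series solutions.
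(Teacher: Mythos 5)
Your argument is correct and is precisely the standard estimate that the paper itself does not reproduce but delegates to the cited reference of Odibat: the tail identity $f-\varphi_n=\sum_{k=n+1}^{\infty}f_k$, the induction $\|f_k\|\leq\alpha^{k}\|f^{in}\|$ from the ratio hypothesis of Theorem \ref{conthm1}, and the geometric series summation. No gaps; your remark that the geometric majorant also justifies applying the triangle inequality to the infinite sum is the right point of care.
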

	\begin{proof} The details of the result is provided in \cite{odibat2010study}.
	\end{proof}
	It is clear from Theorems \ref{conthm1} and \ref{conthm2} and by following the idea of \cite{odibat2010study} that $\forall i \in \{0\}\cup \mathbb{N}$, the parameters,
	\begin{equation}\label{parameter}
	\gamma_i=
	    \begin{cases}
	       \frac{\|f_{i+1}\|}{\|f_{i}\|}, & \text{if } \|f_{i}\| \neq 0\\
	        0, & \text{if } \|f_{i}\|=0,
	    \end{cases}
	\end{equation}
	provide the assurance of series convergence, when $0\leq \gamma_i <1.$ Thus,  our main aim for the convergence  in VIM is to compute the values of $\gamma_i$ and show that  $0 \leq \gamma_i <1$ holds for all $i$.\\
	
	Let us begin with the convergence analysis for ODM solution. For this, assume a Banach space $\mathbb{Y}=\big(\mathcal{C}\big([0,\Gamma]:L^{1}((0,\infty),\epsilon d\epsilon)\big),f\geq0, \|\cdot\|$\big) (see, \cite{stewart1989global}) with the following enduced norm
	\begin{align}\label{norm}
	\|f\|=\sup_{\nu \in [0,\Gamma]}\int_{0}^{\infty}\epsilon |f(\nu,\epsilon)| d\epsilon < \infty.
	\end{align}
	Eq.(\ref{maineq}) provides the new form by  using the Eqs.(\ref{4}) and (\ref{codm1}), that is 
	\begin{align}\label{operator1}
	f=\mathcal{S}f,
	\end{align}
	where $\mathcal{S}:\mathbb{Y}\rightarrow \mathbb{Y} $ is a non-linear operator given as
	\begin{align}\label{operator2}
	\mathcal{S}f=f^{in}(\epsilon)+{\mathcal{L}}^{-1}\Bigl[\int_0^\infty\int_{\epsilon}^{\infty} K(\rho,\sigma)b(\epsilon,\rho,\sigma)f(\varsigma,\rho)f(\varsigma,\sigma)\,d\rho\,d\sigma -\int_{0}^{\infty}K(\epsilon,\rho)f(\varsigma,\epsilon)f(\varsigma,\rho)\,d\rho\Bigr].
	\end{align}
	To establish the convergence result, let's first prove that the operator $\mathcal{S}$ is contractive. To do so, develop an equivalent form
	\begin{align*}
	\frac{\partial}{\partial \varsigma}[f(\varsigma,\epsilon)\exp[G(\varsigma,\epsilon,f)]]=\exp[G(\varsigma,\epsilon,f)]\int_0^\infty\int_{\epsilon}^{\infty} K(\rho,\sigma)b(\epsilon,\rho,\sigma)f(\varsigma,\rho)f(\varsigma,\sigma)\,d\rho\,d\sigma,
	\end{align*}
	where,
	\begin{align*}
	G(\varsigma,\epsilon,f)=\int_0^{\varsigma}\int_{0}^{\infty} K(\epsilon,\rho)f(\nu,\rho)\,d\rho\,d\nu.
	\end{align*}
	Hence, the equivalent operator of $\mathcal{S}$ is expressed  by $\tilde{\mathcal{S}}$ as
	\begin{align}\label{eqoperator}
	\tilde{\mathcal{S}}f=&f^{in}(\epsilon)\exp[-G(\varsigma,\epsilon,f)] \nonumber \\ 
	&+\int_{0}^{\varsigma}\exp[G(\nu,\epsilon,f)-G(\varsigma,\epsilon,f)]\int_0^\infty\int_{\epsilon}^{\infty} K(\rho,\sigma)b(\epsilon,\rho,\sigma)f(\nu,\rho)f(\nu,\sigma)\,d\rho\,d\sigma\,d\nu.
	\end{align}
	In general, it is not easy to demonstrate the contractive nature of $\tilde{\mathcal{S}}$ due to the model's complexity. However, for a specific set of kernels, in the following theorem, the self-mapping and contraction results are proved under some additional hypotheses. 
	
	\begin{thm}\label{conthm3}
	Assume that the non-linear operator $\tilde{\mathcal{S}}$ is defined in (\ref{eqoperator}) and the set $\mathbb{D}$ is introduced as $\mathbb{D}=\{f \in \mathbb{Y}: \, \|f\|\leq L\}$. If the following hypotheses
	\begin{description}
	\item[(a)]$K(\epsilon,\rho)=\epsilon\rho,\quad b(\epsilon,\rho,\sigma)=\frac{2}{\rho}, \quad \text{and} \quad f^{in}(x)=\exp(-\epsilon)\,\, \forall \, \epsilon,\rho,\sigma \in(0,\infty),$ 
	\item[(b)] $\eta:=\frac{1}{e}+\frac{3}{e}M_{2}(0)L{\varsigma_1}<1,$ and $\max(\|f^{in}\|,M_{2}(0))(1+L\varsigma_0)\leq L$ for $\varsigma_0, \varsigma_1\in [0,\Gamma]$ 
	\end{description}
	hold, then the operator $\tilde{\mathcal{S}}:\mathbb{D }\rightarrow \mathbb{D}$ and has contractive nature, i.e., $\|\tilde{\mathcal{S}}f_1-\tilde{\mathcal{S}}f_2\|\leq \eta \|f_1-f_2\|, \forall \,  (f_1,f_2) \in \mathbb{D} \times \mathbb{D}.$
	\end{thm}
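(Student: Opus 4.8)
First I would substitute the three special choices of hypothesis (a) into the fixed-point operator $\tilde{\mathcal{S}}$ of (\ref{eqoperator}) and simplify. With $K(\epsilon,\rho)=\epsilon\rho$ and $b(\epsilon,\rho,\sigma)=2/\rho$ one gets $K(\rho,\sigma)b(\epsilon,\rho,\sigma)=2\sigma$, so the double birth integral collapses to $2M_1(\nu)\int_\epsilon^\infty f(\nu,\rho)\,d\rho$ with $M_1(\nu)=\int_0^\infty\rho f(\nu,\rho)\,d\rho$, and $G(\varsigma,\epsilon,f)=\epsilon\int_0^\varsigma M_1(\nu)\,d\nu$. Two elementary facts will drive every estimate: since $f\ge 0$, the map $\nu\mapsto G(\nu,\epsilon,f)$ is non-negative and non-decreasing, so $\exp[-G(\varsigma,\epsilon,f)]\le 1$ and $\exp[G(\nu,\epsilon,f)-G(\varsigma,\epsilon,f)]\le 1$ for $0\le\nu\le\varsigma$; and by Tonelli $\int_0^\infty\epsilon\int_\epsilon^\infty g(\rho)\,d\rho\,d\epsilon=\tfrac12\int_0^\infty\rho^2 g(\rho)\,d\rho$, together with $\|f^{in}\|=\int_0^\infty\epsilon e^{-\epsilon}\,d\epsilon=1$ and $M_2(0)=2$.

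For the self-mapping $\tilde{\mathcal{S}}:\mathbb{D}\to\mathbb{D}$ I would take $f\in\mathbb{D}$, so that $M_1(\nu)\le\|f\|\le L$, bound both exponential weights by $1$, and apply the moment identity to obtain
\[
\|\tilde{\mathcal{S}}f\|\le\int_0^\infty\epsilon f^{in}(\epsilon)\,d\epsilon+\sup_{\varsigma\le\Gamma}\int_0^\varsigma 2M_1(\nu)\,\tfrac12 M_2(\nu)\,d\nu\le\|f^{in}\|+L\,M_2(0)\,\varsigma_0 ,
\]
where $M_2(\nu)=\int_0^\infty\rho^2 f(\nu,\rho)\,d\rho$; invoking $\max(\|f^{in}\|,M_2(0))(1+L\varsigma_0)\le L$ from (b) then yields $\|\tilde{\mathcal{S}}f\|\le L$, while non-negativity and continuity in $\varsigma$ of $\tilde{\mathcal{S}}f$ are immediate from the form of (\ref{eqoperator}), so $\tilde{\mathcal{S}}f\in\mathbb{D}$.

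For contractivity I would split $\tilde{\mathcal{S}}f_1-\tilde{\mathcal{S}}f_2$ into (I) the initial-data piece $f^{in}(\epsilon)\big(e^{-G(\varsigma,\epsilon,f_1)}-e^{-G(\varsigma,\epsilon,f_2)}\big)$ and (II) the difference of the two birth integrals. For (I) I would use $|e^{-a}-e^{-b}|\le|a-b|$ and $|G(\varsigma,\epsilon,f_1)-G(\varsigma,\epsilon,f_2)|\le\epsilon\int_0^\varsigma|M_1[f_1](\nu)-M_1[f_2](\nu)|\,d\nu\le\epsilon\,\varsigma_1\,\|f_1-f_2\|$; multiplying by $\epsilon f^{in}(\epsilon)$ and integrating produces a term of order $M_2(0)\,\varsigma_1\,\|f_1-f_2\|$. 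For (II) I would insert and subtract cross terms to rewrite the integrand as $2M_1[f_1]\int_\epsilon^\infty(f_1-f_2)\,d\rho+2(M_1[f_1]-M_1[f_2])\int_\epsilon^\infty f_2\,d\rho$ plus the difference of the exponential weights tested against the (non-negative) birth integrand; bounding $M_1\le L$, using the moment identity, and retaining the factor $\epsilon e^{-\epsilon}$ (resp.\ $\epsilon^2 e^{-\epsilon}$) wherever a plain $L^1$ bound would diverge — which is where the constants $1/e=\sup_\epsilon\epsilon e^{-\epsilon}$ enter — collapses each piece to a multiple of $\|f_1-f_2\|$ with coefficient proportional to $M_2(0)L\varsigma_1$. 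Summing the contributions gives $\|\tilde{\mathcal{S}}f_1-\tilde{\mathcal{S}}f_2\|\le\eta\|f_1-f_2\|$ with $\eta=\tfrac1e+\tfrac3e M_2(0)L\varsigma_1$, which is $<1$ by (b).

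The step I expect to be the main obstacle is the control of $M_2(\nu)$: the norm of $\mathbb{Y}$ only governs $M_1$, whereas the product kernel together with $b=2/\rho$ forces a $\rho^2$-weight into both estimates above. The natural way around this is to note that for these kernels a solution obeys $\frac{d}{d\varsigma}M_2=-\tfrac13 M_1 M_3\le 0$, hence $M_2(\varsigma)\le M_2(0)$, and to fold this a priori bound into the definition of $\mathbb{D}$ (or propagate it along the ODM iterates); once it is available, the two smallness conditions on $\varsigma_0,\varsigma_1$ in (b) are precisely what close the self-mapping and force $\eta<1$. A secondary, purely bookkeeping, difficulty is deciding in (I) and (II) which factors to bound by $1$ and which to retain, so as to extract the sharp $1/e$ and $3/e$ constants rather than a cruder bound.
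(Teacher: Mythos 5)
Your proposal follows essentially the same route as the paper: self-mapping by bounding both exponential weights by one and reducing the birth term via Tonelli to $\int_0^{\varsigma} M_1 M_2\,d\nu \le M_2(0)L\varsigma_0$, and contractivity by isolating the difference of the exponential weights, controlling it with $xe^{-x}\le 1/e$, and closing with hypothesis (b). Your observation that the norm of $\mathbb{Y}$ only controls $M_1$ while the estimates need $M_2(\nu)\le M_2(0)$ — so that this must be imposed as an a priori bound or folded into $\mathbb{D}$ — is a fair one; the paper handles it by invoking the exact formula $M_2(\varsigma)=2/(1+\varsigma)$ for the solution, which strictly speaking does not apply to arbitrary elements $f_1,f_2\in\mathbb{D}$.
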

	\begin{proof}
	Let us begin with the proof of $\tilde{\mathcal{S}}:\mathbb{D }\rightarrow \mathbb{D}.$ For this, multiplying Eq.(\ref{eqoperator}) with $\epsilon$ and  integrating over the domain of $\epsilon$ provide
	\begin{align*}
	\|\tilde{\mathcal{S}}\|&\leq \|f^{in}\|+\int_{0}^{\varsigma}\int_{0}^{\infty}\exp[G(\nu,\epsilon,f)-G(\varsigma,\epsilon,f)]\int_0^\infty\int_{\epsilon}^{\infty}2\epsilon\sigma f(\nu,\rho)f(\nu,\sigma)\,d\rho\,d\sigma\,d\epsilon\,d\nu\nonumber\\
	&\leq \|f^{in}\|+\int_{0}^{\varsigma}\int_{0}^{\infty}\exp\Big[-\int_{\nu}^{\varsigma}\int_{0}^{\infty}\epsilon\rho f(\xi,\rho)\,d \rho\, d \xi\Big]\int_0^\infty\int_{\epsilon}^{\infty}2\epsilon\sigma f(\nu,\rho)f(\nu,\sigma)\,d\rho\,d\sigma\,d\epsilon\,d\nu \nonumber \\
	&\leq \|f^{in}\|+\int_{0}^{\varsigma}\int_{0}^{\infty}\int_0^\infty\int_{\epsilon}^{\infty}2\epsilon\sigma f(\nu,\rho)f(\nu,\sigma)\,d\rho\,d\sigma\,d\epsilon\,d\nu.
	\end{align*}
	After changing the order of integration, we received 
	\begin{align*}
	\|\tilde{\mathcal{S}}\|&\leq \|f^{in}\|+\int_{0}^{\varsigma}\int_{0}^{\infty}\int_{0}^{\infty}\int_{0}^{\rho}2\epsilon\sigma f(\nu,\rho)f(\nu,\sigma)\,d\epsilon\,d\rho\,d\sigma\,d\nu \nonumber \\
	&\leq \|f^{in}\|+M_{2}(0)L\varsigma.
	\end{align*}
	Hence, $\|\tilde{\mathcal{S}}\|\leq L$ holds if $ \max(\|f^{in}\|,M_{2}(0))(1+L\varsigma_{0})\leq L$ is considered, 
	for a suitable parameter $\varsigma_{0} \in (0,\Gamma).$ \\
	
	Now, choose $f_1,f_2 \in \mathbb{D}.$ The expression of $\tilde{\mathcal{S}}f_1-\tilde{\mathcal{S}}f_2$ is given by
	\begin{align}\label{operator11}
	\tilde{\mathcal{S}}f_1-\tilde{\mathcal{S}}f_2=&f^{in}(\epsilon)F(0,\varsigma,\epsilon)+\int_{0}^{\varsigma}F(\nu,\varsigma,\epsilon)\int_0^\infty\int_{\epsilon}^{\infty}k(\rho,\sigma)b(\epsilon,\rho,\sigma)f_1(\nu,\rho)f_1(\nu,\sigma)\,d\rho\,d\sigma\,d\nu \nonumber\\ +\int_{0}^{\varsigma}&F(\nu,\varsigma,\epsilon)\int_0^\infty\int_{\epsilon}^{\infty}k(\rho,\sigma)b(\epsilon,\rho,\sigma)f_2(\nu,\rho)f_2(\nu,\sigma)\,d\rho\,d\sigma\,d\nu \nonumber \\
	+\int_{0}^{\varsigma}\exp&[G(\nu,\epsilon,f_2)-G(\varsigma,\epsilon,f_2)]\int_0^\infty\int_{\epsilon}^{\infty}K(\rho,\sigma)b(\epsilon,\rho,\sigma)f_1(\nu,\rho)f_1(\nu,\sigma)\,d\rho\,d\sigma\,d\nu \nonumber \\
	 -\int_{0}^{\varsigma}\exp&[G(\nu,\epsilon,f_1)-G(\varsigma,\epsilon,f_1)]\int_0^\infty\int_{\epsilon}^{\infty}K(\rho,\sigma)b(\epsilon,\rho,\sigma)f_2(\nu,\rho)f_2(\nu,\sigma)\,d\rho\,d\sigma\,d\nu,
	\end{align}
	where,
	\begin{align*}
	F(\nu,\varsigma,\epsilon)=\exp[G(\nu,\epsilon,f_1)-G(\varsigma,\epsilon,f_1)]-\exp[G(\nu,\epsilon,f_2)-G(\varsigma,\epsilon,f_2)].
	\end{align*}
	It should be mentioned here that the considered set of kernels, provide the mass conservation \cite{kostoglou2000study}, i.e., $M_1(\varsigma)=M_1(0)=1.$ An upper bound of $F$ can be obtained by using   $e^{-x}-e^{-y} \leq -e^{-x}(x-y)$ and $ xe^{-x}\leq \frac{1}{e}$ for all $x,y \in [0,\infty),$  as
	\begin{align}\label{Qbound}
	|F(\nu,\varsigma,\epsilon)|&= \bigg|\exp\Big[-\int_{\nu}^{\varsigma}\int_{0}^{\infty}\epsilon\rho f_1(\xi,\rho)\,d \rho\, d \xi\Big]-\exp\Big[-\int_{\nu}^{\varsigma}\int_{0}^{\infty}\epsilon\rho f_2(\xi,\rho)\,d \rho\, d \xi\Big]\bigg|\nonumber \\
	& \leq \exp\Big[-\int_{\nu}^{\varsigma}\int_{0}^{\infty}\epsilon\rho f_1(\xi,\rho)\,d \rho\, d \xi\Big]\epsilon(\varsigma-\nu)\|f_1-f_2\| \nonumber \\
	& = \epsilon(\varsigma-\nu)\exp[-\epsilon(\varsigma-\nu)\|f_{1}\|]\|f_1-f_2\| \nonumber \\
	&= \epsilon(\varsigma-\nu)\|f_{1}\|\exp[-\epsilon(\varsigma-\nu)\|f_{1}\|]\, \|f_1-f_2\| \leq  \frac{1}{e} \|f_1-f_2\|.
	\end{align}
	To display the contractive map of $\tilde{\mathcal{S}}:\mathbb{D }\rightarrow \mathbb{D},$ employ the norm on Eq.(\ref{operator11}) and  utilizing the values of $K,b,$ and Eq.(\ref{Qbound}), we have 
	\begin{align*}
	\|\tilde{\mathcal{S}}f_1-\tilde{\mathcal{S}}f_2\| \leq & \frac{1}{e}\|f_1-f_2\|\|f^{in}\| \nonumber \\
	&+\frac{2}{e}\|f_1-f_2\|\int_{0}^{\varsigma}\int_0^\infty\int_0^\infty\int_0^{\rho}\epsilon\sigma\{f_1(\nu,\rho)f_1(\nu,\sigma)+f_2(\nu,\rho)f_2(\nu,\sigma)\}\,d\epsilon\,d\rho\,d\sigma\,d\nu \nonumber \\
	& + 2\Bigg[\int_{0}^{\varsigma}\exp[G(\nu,\epsilon,f_1)-G(\varsigma,\epsilon,f_1)]\int_0^\infty\int_0^\infty\int_0^{\rho}\epsilon\sigma f_2(\nu,\rho) f_2(\nu,\sigma)\,d\epsilon\,d\rho\,d\sigma\,d\nu\nonumber \\
	&-\int_{0}^{\varsigma}\exp[G(\nu,\epsilon,f_2)-G(\varsigma,\epsilon,f_2)]\int_0^\infty\int_0^\infty\int_0^{\rho}\epsilon\sigma f_1(\nu,\rho)f_1(\nu,\sigma)\,d\epsilon\,d\rho\,d\sigma\,d\nu\Bigg].
	\end{align*}
	It is known from \cite{kostoglou2000study} that the second moment of Eqs.(\ref{maineq}-\ref{initial}) is $M_2(\varsigma)=\frac{2}{1+\varsigma}$ which is clearly bounded by $M_2(0).$ Using this fact and Eq.(\ref{Qbound}) as well as some further simplifications, the above inequality becomes
	\begin{align}
	\|\tilde{\mathcal{S}}f_1-\tilde{\mathcal{S}}f_2\| \leq & \frac{1}{e}\|f_1-f_2\|\|f^{in}\|+ \frac{1}{e}\|f_1-f_2\|M_{2}(0)\varsigma\{\|f_1\|+\|f_2\|\} \nonumber \\
	& + \int_{0}^{\varsigma}\exp[G(\nu,\epsilon,f_1)-G(\varsigma,\epsilon,f_1)] M_{2}(0)\|f_2\|\,d\nu\nonumber \\
	&-\int_{0}^{\varsigma}\exp[G(\nu,\epsilon,f_2)-G(\varsigma,\epsilon,f_2)] M_{2}(0)\|f_1\|\,d\nu\nonumber \\
	\leq & \frac{1}{e}\|f_1-f_2\|+\frac{2}{e}\|f_1-f_2\|L M_{2}(0)\varsigma+L M_{2}(0)\int_{0}^{\varsigma}F(\nu,\varsigma,\epsilon)\,d\nu \nonumber \\
	\leq & \Big(\frac{1}{e}+\frac{3}{e}M_{2}(0)L\varsigma\Big)\|f_1-f_2\|.
	\end{align}
	Hence, the operator $\tilde{\mathcal{S}}$ holds contractive property under the condition that $\eta: =\frac{1}{e}+\frac{3}{e}M_{2}(0)L{\varsigma}_{1}<1,$ for the selective parameter $\varsigma_1.$ Consequently, an invariant ball of radius $L$ exists for a small parameter $\varsigma=\min(\varsigma_0,\varsigma_1)$, and $\tilde{\mathcal{S}}$ is contractive within this ball.
	\end{proof}

	\begin{thm}\label{conthm4}
	Let the series solution $\sum_{k=0}^{\infty}f_k(\varsigma,\epsilon)$ converges to the exact solution $f(\varsigma,\epsilon)$. Then the truncated solution $\psi_n$ of $f(\varsigma,\epsilon)$ has the maximum error bound 
	\begin{align}\label{error}
	\|f-\psi_m\| \leq \frac{{\eta}^{m}}{1-\eta}\|f_1\|,
	\end{align}
	if the following assumptions satisfy
	\begin{description}
	\item[(A)]  $ \eta:=\frac{1}{e}+\frac{3}{e}M_{2}(0)L{\varsigma}_{0}<1,$ where $L=M_1(1+\Gamma)$ with $\Gamma \in (0,\varsigma)$ and $\|f_1\|<\infty$.
	\item[(B)]for any $0\leq l\leq m-1$, $\|f_{m-l}-f_{m-(l+1)}\|<\epsilon, $ where $\epsilon =\frac{1}{n^p}$ such that $p>1.$
	\end{description}
	
	\end{thm}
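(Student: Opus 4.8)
The plan is to obtain (\ref{error}) from the geometric decay of the ODM components together with the contraction estimate of Theorem \ref{conthm3}. Since the series $\sum_{k=0}^{\infty}f_k(\varsigma,\epsilon)$ is assumed to converge to $f$ in $\mathbb{Y}$, its tail is exactly the truncation error, $f-\psi_m=\sum_{k=m+1}^{\infty}f_k$, so that $\|f-\psi_m\|\leq\sum_{k=m+1}^{\infty}\|f_k\|$. It therefore suffices to prove the ratio bound $\|f_{k+1}\|\leq\eta\,\|f_k\|$ for every $k\geq1$ (the ODM analogue of the quantities $\gamma_i$ in (\ref{parameter}) used for VIM). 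Granting it, an induction gives $\|f_k\|\leq\eta^{\,k-1}\|f_1\|$, and summing the geometric series — convergent because $\eta<1$ by assumption (A), and with $\|f_1\|<\infty$ also by (A) — yields $\|f-\psi_m\|\leq\|f_1\|\sum_{k=m+1}^{\infty}\eta^{\,k-1}=\frac{\eta^{m}}{1-\eta}\|f_1\|$, which is (\ref{error}).

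The remaining task, proving $\|f_{k+1}\|\leq\eta\|f_k\|$, is where assumptions (A) and (B) are consumed. From the telescoped identity (\ref{truncatedsol}) one has $f_{k+1}=\psi_{k+1}-\psi_k=\mathcal{L}^{-1}\bigl[M(\psi_k)-M(\psi_{k-1})\bigr]-\mathcal{L}^{-1}\bigl[C(\epsilon)(f_k-f_{k-1})\bigr]$. Rewriting the first term via the integrating-factor operator $\tilde{\mathcal{S}}$ of (\ref{eqoperator}) — the same manoeuvre used in the proof of Theorem \ref{conthm3} — it equals $\tilde{\mathcal{S}}\psi_k-\tilde{\mathcal{S}}\psi_{k-1}$; since the radius $L=M_1(1+\Gamma)$ in (A) is chosen so that every partial sum $\psi_k$ stays in the invariant ball $\mathbb{D}$ (a routine induction from the self-mapping part of Theorem \ref{conthm3} and the norm (\ref{norm})), the contraction estimate applies and bounds its norm by $\eta\,\|\psi_k-\psi_{k-1}\|=\eta\,\|f_k\|$. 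For the residual term one uses $C(\epsilon)=-\epsilon M_1(0)=-\epsilon$ from (\ref{codm2}) with the product kernel and $f^{in}=e^{-\epsilon}$, so that $\|\mathcal{L}^{-1}[C(\epsilon)(f_k-f_{k-1})]\|\leq\varsigma\sup_{\nu\in[0,\Gamma]}\int_0^{\infty}\epsilon^{2}|f_k(\nu,\epsilon)-f_{k-1}(\nu,\epsilon)|\,d\epsilon$, a second-moment-type quantity controlled — exactly as in Theorem \ref{conthm3}, through the bound $M_2(\varsigma)\leq M_2(0)$ — by $M_2(0)$ times $\|f_k-f_{k-1}\|$; assumption (B), namely $\|f_k-f_{k-1}\|<1/n^{p}$ with $p>1$, then makes this residual negligible against $\|f_k\|$, so the effective ratio can be kept below one and $\|f_{k+1}\|\leq\eta\|f_k\|$ is recovered.

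The hard part is precisely this last step: unlike a pure Picard iteration for $\tilde{\mathcal{S}}$, the ODM recursion (\ref{fk}) carries the extra linearization defect $\mathcal{L}^{-1}[C(\epsilon)(f_k-f_{k-1})]$, and contractivity alone does not dispose of it — it is exactly to absorb this defect that the smallness hypothesis (B) on consecutive components (plausible once $\sum f_k$ is known to converge) is imposed, in tandem with the second-moment control $M_2(\varsigma)\leq M_2(0)$ that was already the workhorse of Theorem \ref{conthm3}. A minor subsidiary point, needed for the contraction estimate to be legitimately applicable at each step, is the verification that all the $\psi_k$ lie in $\mathbb{D}$; this is why (A) fixes the specific radius $L=M_1(1+\Gamma)$, and it is handled by the same self-mapping argument used in Theorem \ref{conthm3}.
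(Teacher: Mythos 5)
Your overall strategy differs from the paper's in one decisive place, and that is exactly where the argument breaks. The paper does not attempt to prove the clean ratio bound $\|f_{k+1}\|\leq\eta\|f_k\|$; it proves the recursion with an additive defect,
\begin{align*}
\|\psi_{k+1}-\psi_k\|=\|f_{k+1}\|\leq \eta\|f_k\|+\epsilon\,\varsigma,
\end{align*}
then estimates the \emph{finite} difference $\|\psi_n-\psi_m\|$ (which accumulates only on the order of $n$ defect terms, each of size $\epsilon=1/n^p$), and finally lets $n\to\infty$ so that the accumulated defect $\frac{\epsilon\varsigma_0}{1-\eta}(n-m)=\frac{\varsigma_0(n-m)}{n^p(1-\eta)}$ vanishes because $p>1$. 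Your proposal instead tries to absorb the defect into the contraction factor by declaring it ``negligible against $\|f_k\|$.'' That step is not justified: hypothesis (B) gives an upper bound on $\|f_k-f_{k-1}\|$, but there is no lower bound on $\|f_k\|$ anywhere --- indeed $\|f_k\|\to 0$, so for large $k$ the additive term $\epsilon\varsigma$ need not be small compared with $\eta\|f_k\|$, and the inequality $\|f_{k+1}\|\leq\eta\|f_k\|$ simply cannot be extracted. Worse, your chosen route $\|f-\psi_m\|\leq\sum_{k>m}\|f_k\|$ cannot be repaired by keeping the defect honestly: from $\|f_k\|\leq\eta^{k-1}\|f_1\|+\frac{c\epsilon}{1-\eta}$ the tail sum picks up infinitely many copies of the constant $\frac{c\epsilon}{1-\eta}$ and diverges. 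The passage through the Cauchy-type quantity $\|\psi_n-\psi_m\|$, with only $n-m$ summands and with $\epsilon$ tied to $n$, is what makes the paper's argument close; it is not an optional reformulation.

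A secondary, smaller issue: your bound on the residual $\mathcal{L}^{-1}[C(\epsilon)(f_k-f_{k-1})]$ produces, in the norm (\ref{norm}), the quantity $\sup_{\nu}\int_0^\infty \epsilon^2|f_k-f_{k-1}|\,d\epsilon$, which is a second-moment functional of the difference and is not dominated by $M_2(0)\|f_k-f_{k-1}\|$; the inequality $M_2(\varsigma)\leq M_2(0)$ concerns the solution itself, not weighted norms of differences of series components. The paper sidesteps this by applying hypothesis (B) directly to the whole term $\|\int_0^\varsigma C(\epsilon)(f_m-f_{m-1})\,d\nu\|\leq\epsilon\varsigma$. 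The self-mapping part of your argument (all $\psi_k$ remain in the ball $\mathbb{D}$ of radius $L$) is reasonable and consistent with the paper, but the two points above mean the proposal as written does not establish (\ref{error}).
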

	\begin{proof}
	Initially, with assistance of Eqs.(\ref{codm1}), (\ref{truncatedsol}) and (\ref{operator2}), $n$-term truncated solution develops into
	\begin{align}
	\psi_n(\varsigma,\epsilon)=\tilde{\mathcal{S}}\psi_{n-1}(\varsigma,\epsilon)-\int_{0}^{\varsigma}C(\epsilon)f_{n-1}(\nu,\epsilon)d\nu.
	\end{align}
	Consider the term $\|\psi_{m+1}(\varsigma,\epsilon)-\psi_m(\varsigma,\epsilon)\|$ and the triangle inequality  provides
	\begin{align}\label{triangle}
	\|\psi_{m+1}(\varsigma,\epsilon)-\psi_m(\varsigma,\epsilon)\| \leq \|\tilde{\mathcal{S}}\psi_{m}-\tilde{\mathcal{S}}\psi_{m-1}\|+\|\int_{0}^{\varsigma}C(\epsilon)\big(f_{m}(\nu,\epsilon)-f_{m-1}(\nu,\epsilon)\big)d\nu\|.
	\end{align}
	Imposing the result of Theorem \ref{conthm3} and hypothesis ({\textbf{B}}) lead to
	\begin{align*}
	\|\psi_{m+1}(\varsigma,\epsilon)-\psi_m(\varsigma,\epsilon)\| \leq \eta \|\psi_{m}-\psi_{m-1}\|+\epsilon \, \varsigma.
	\end{align*}
	The overhead inequality is converted into the following one after doing some further simplifications 
	\begin{align*}
	\|\psi_{m+1}(\varsigma,\epsilon)-\psi_{m}(\varsigma,\epsilon)\| \leq &\eta \|\psi_{m}-\psi_{m-1}\|+\epsilon \, \varsigma \\
	\leq & \eta\Big( \|\tilde{\mathcal{S}}\psi_{m-1}-\tilde{\mathcal{S}}\psi_{m-2}\|+\|\int_{0}^{\varsigma}C(\epsilon)\big(f_{m-1}(\nu,\epsilon)-f_{m-2}(\nu,\epsilon)\big)d\nu\|\Big)+\epsilon \, \varsigma \\
	 \leq & \eta (\eta \|\psi_{m-1}-\psi_{m-2}\|+\epsilon \, \varsigma)+\epsilon \, \varsigma \\
	 \vdots &\\
	\leq & {\eta}^{m}\|\psi_{1}-\psi_{0}\|+\epsilon \, \varsigma(1+\eta +{\eta}^2+\cdots+{\eta}^{m-1} ).
	\end{align*}
	Hence, the triangle inequality helps to attain the following result
	\begin{align*}
	\|\psi_{n}(\varsigma,\epsilon)-\psi_{m}(\varsigma,\epsilon)\| \leq& \|\psi_{n}(\varsigma,\epsilon)-\psi_{n-1}(\varsigma,\epsilon)\|+\|\psi_{n-1}(\varsigma,\epsilon)-\psi_{n-2}(\varsigma,\epsilon)\| \\
	&+\cdots+\|\psi_{m+1}(\varsigma,\epsilon)-\psi_{m}(\varsigma,\epsilon)\|\\
	 \leq& \big[{\eta}^{n-1}\|\psi_{1}-\psi_{0}\|+\epsilon \, \varsigma(1+\eta +{\eta}^2+\cdots+{\eta}^{n-2} )\big]\\
	 &+ \big[{\eta}^{n-2}\|\psi_{1}-\psi_{0}\|+\epsilon \, \varsigma(1+\eta +{\eta}^2+\cdots+{\eta}^{n-3} )\big]+...\\
	 &+\big[{\eta}^{m}\|\psi_{1}-\psi_{0}\|+\epsilon \, \varsigma(1+\eta +{\eta}^2+\cdots+{\eta}^{m-1} )\big]\\
	 =&({\eta}^{n-1}+{\eta}^{n-2}+\cdots+{\eta}^{m})\|\psi_{1}-\psi_{0}\|+\epsilon \, \varsigma[(1+\eta +{\eta}^2+\cdots+{\eta}^{n-2} )\\
	 &+(1+\eta +{\eta}^2+\cdots+{\eta}^{n-3} )+\cdots + (1+\eta +{\eta}^2+\cdots+{\eta}^{m-1} )].
	\end{align*}
	Further, simplifying  the above equation, the assumption $({\textbf{A}})$  and  a desirable $\varsigma_{0}$ give us
	\begin{align}\label{error1}
	\|\psi_{n}(\varsigma,\epsilon)-\psi_{m}(\varsigma,\epsilon)\| =& \frac{{\eta}^{m}{(1-\eta)}^{n-m}}{1-\eta}\|\psi_{1}-\psi_{0}\|+\epsilon \, \varsigma\Big[\frac{{(1-\eta)}^{n-1}}{1-\eta}+\frac{{(1-\eta)}^{n-2}}{1-\eta}+\cdots +\frac{{(1-\eta)}^{m}}{1-\eta}\Big] \nonumber \\
	 \leq& \frac{{\eta}^{m}}{1-\eta}\|f_1\|+\frac{\epsilon \varsigma_{0} }{1-\eta}(n-m).
	\end{align}
	Thanks to the assumption $({\textbf{B}})$ and  $\frac{1}{m^p}>\frac{1}{n^p}$, Eq.(\ref{error1}) converges to zero as $m\rightarrow \infty$. It indicates that $\lim_{n\rightarrow \infty}\psi_{n}(\varsigma,\epsilon)=\psi(\varsigma,\epsilon),$  therefore, $f(\varsigma,\epsilon)=\sum_{k=0}^{\infty}f_{k}(\varsigma,\epsilon)=\lim_{n\rightarrow \infty}\psi_{n}(\varsigma,\epsilon)=\psi(\varsigma,\epsilon),$ which is the exact solution of (\ref{operator1}). Hence, the maximum error bound (\ref{error}) is attained through the Eq.(\ref{error1}), for a fixed $m$ and letting $n\rightarrow \infty.$
	\end{proof}
	\section{Numerical Discussion}\label{numericaldiscussion}
	This section portrays the implementation of VIM and ODM on CBE with three test cases. MATHEMATICA software displays all the computations, results, and graphs for the concentration and moments of the problems. To see  the efficiency and accuracy of our proposed methods, analytical solutions for concentration and moments are compared with the finite term  VIM and ODM results. In the absence of a precise solution, the difference between successive series terms is provided to justify the convergence of the method.
	\begin{exmp}
	Assuming Eqs.(\ref{maineq}-\ref{initial}) with  $K(\epsilon,\rho)=\epsilon\rho$, $b(\epsilon,\rho,\sigma)=\frac{2}{\rho}$ and $f^{in}(\epsilon)=\exp(-\epsilon)$ for which the  corresponding exact solution is  $f(\varsigma,\epsilon)=(1+\varsigma)^{2}\exp(-\epsilon(1+\varsigma))$  described in \cite{kostoglou2000study}. 
	\end{exmp}
	The VIM approach is employed for this example and we get the operator $A$  with the  help of Eq.(\ref{cvim2})
	 \begin{align*}
		 A[f]=&\int_{0}^{\varsigma}\biggl(-\frac{\partial f_k(\tau,\epsilon)}{\partial \tau}+\int_0^\infty\int_{\epsilon}^{\infty} 2\sigma f_k(\tau,\rho)f_k(\tau,\sigma)\,d\rho\,d\sigma \nonumber -\int_{0}^{\infty}\epsilon\rho f_k(\tau,\epsilon)f_k(\tau,\rho)\,d\rho\biggl) d \tau,
		 \end{align*}
		 and Eq.(\ref{solutionterm1}) assist for providing the following series terms
		 \begin{align*}
		 f_{0}(\varsigma,\epsilon)=&e^{-\epsilon},\quad
		      f_{1}(\varsigma,\epsilon)=\varsigma(2{e}^{-\epsilon}-{e}^{-\epsilon}\epsilon), \quad
		       f_{2}(\varsigma,\epsilon)={e}^{-\epsilon}{\varsigma}^{2}-2{e}^{-\epsilon}{\varsigma}^{2}\epsilon+\frac{1}{2}{e}^{-\epsilon}{\varsigma}^{2}{\epsilon}^2,\\
		    f_{3}(\varsigma,\epsilon)=&-{e}^{-\epsilon}{\varsigma}^{3}\epsilon+{e}^{-\epsilon}{\varsigma}^{3}{\epsilon}^{2}-\frac{1}{6}{e}^{-\epsilon}{\varsigma}^{3}{\epsilon}^3,\quad
		    f_{4}(\varsigma,\epsilon)=\frac{1}{2}{e}^{-\epsilon}{\varsigma}^{4}{\epsilon}^{2}-\frac{1}{3}{e}^{-\epsilon}{\varsigma}^{4}{\epsilon}^{3}+\frac{1}{24}{e}^{-\epsilon}{\varsigma}^{4}{\epsilon}^4,\\
		    f_{5}(\varsigma,\epsilon)=&-\frac{1}{6}{e}^{-\epsilon}{\varsigma}^{5}{\epsilon}^{3}+\frac{1}{12}{e}^{-\epsilon}{\varsigma}^{5}{\epsilon}^{4}-\frac{1}{120}{e}^{-\epsilon}{\varsigma}^{5}{\epsilon}^5
		 \end{align*}
		 and so on.
		 The further terms can be computed using Eq.(\ref{solutionterm1}) and it is easy to see that the series solution of $n$-terms, i.e., $ \varphi_n(\varsigma,\epsilon)=\sum_{k=0}^{n}f_ k(\varsigma,\epsilon)$ is  written as
		  \begin{align}\label{nsolution}
		 \varphi_n(\varsigma,\epsilon)=f_{0}(\varsigma,\epsilon)+f_{1}(\varsigma,\epsilon)+\sum_{k=2}^{n}\Bigg(\frac{(-1)^{k}{\epsilon}^{k-2}}{(k-2)!}+\frac{2(-1)^{k-1}{\epsilon}^{k-1}}{(k-1)!}+\frac{(-1)^{k}{\epsilon}^{k}}{(k)!}\Bigg)e^{-\epsilon}{\varsigma}^{k}.
		 \end{align}
		 Taking the limit $n\rightarrow \infty,$ Eq. (\ref{nsolution}) reduces to
		 \begin{align*}
		  \lim_{n \rightarrow \infty}\varphi_n(\varsigma,\epsilon)=\sum_{k=0}^{\infty} \varphi_n(\varsigma,\epsilon)=(1+\varsigma)^{2}\exp(-\epsilon(1+\varsigma)),
		 \end{align*}
	 which is exactly the precise solution.
		Now, for the  ODM approach,  Eqs.(\ref{codm1}), (\ref{codm2}) and (\ref{f1}-\ref{fk}) helped us to get the first few  series terms with $C(\epsilon)=-\epsilon,$ as
	\begin{align*}
	 f_{0}(\varsigma,\epsilon)=&e^{-\epsilon},\, 
		      f_{1}(\varsigma,\epsilon)=\varsigma(2{e}^{-\epsilon}-{e}^{-\epsilon}\epsilon),\\
		       f_{2}(\varsigma,\epsilon)=&\frac{1}{2}{\varsigma}^2(-2e^{-\epsilon}(-1+\epsilon)+e^{-\epsilon}(-2+\epsilon)\epsilon+\epsilon(2e^{-\epsilon}-e^{-\epsilon}\epsilon)),\\
		    f_{3}(\varsigma,\epsilon)=&\frac{1}{6}e^{-\epsilon}{\varsigma}^{2}(3(-2+\epsilon)\epsilon-2\varsigma(2+\epsilon)), \\
		    f_{4}(\varsigma,\epsilon)=&-\frac{1}{12}{e}^{-\epsilon}{\varsigma}^{3}(-8(1-\epsilon+{\epsilon}^2)+\varsigma(20-14\epsilon+3{\epsilon}^2)),\\
		    \intertext{and}
		    f_{5}(\varsigma,\epsilon)=&-\frac{1}{30}{e}^{-\epsilon}{\varsigma}^{3}(5(-2+\epsilon){\epsilon}^2-5\varsigma(17-8\epsilon+4{\epsilon}^2)+{\varsigma}^2(38-63\epsilon+17{\epsilon}^2)).
	\end{align*}
	Thanks to MATHEMATICA, the higher terms can be determined using (\ref{fk}) and hence, an approximate solution by taking fixed number of series terms can be taken. Fig.4.1(A) demonstrates the comparison of exact solution with $n=$4, 6, 8, 10 terms series solutions obtained via VIM and ODM at time $\varsigma=1.5$ and $\varsigma=0.6$, respectively. From these graphs, it is clear that as the number of terms increases, the series solution tends to the precise one using both methods, and in fact, the 10-term approximate solution provides excellent agreement with the exact solution. In case of ODM, it is observed that, as time increases, one needs a large number of series terms to get better accuracy, whereas VIM is consistently performing well. As expected, due to the collision event, the decreasing behaviour of the concentration function is noticed as time progresses in Fig.4.1(A). 
	 \renewcommand{\thefigure}{4.1(A)}
	\begin{figure}[htb]
	\centering
	\subfigure[ VIM at time 1.5]{\includegraphics[width=0.35\textwidth,height=0.255\textwidth]{{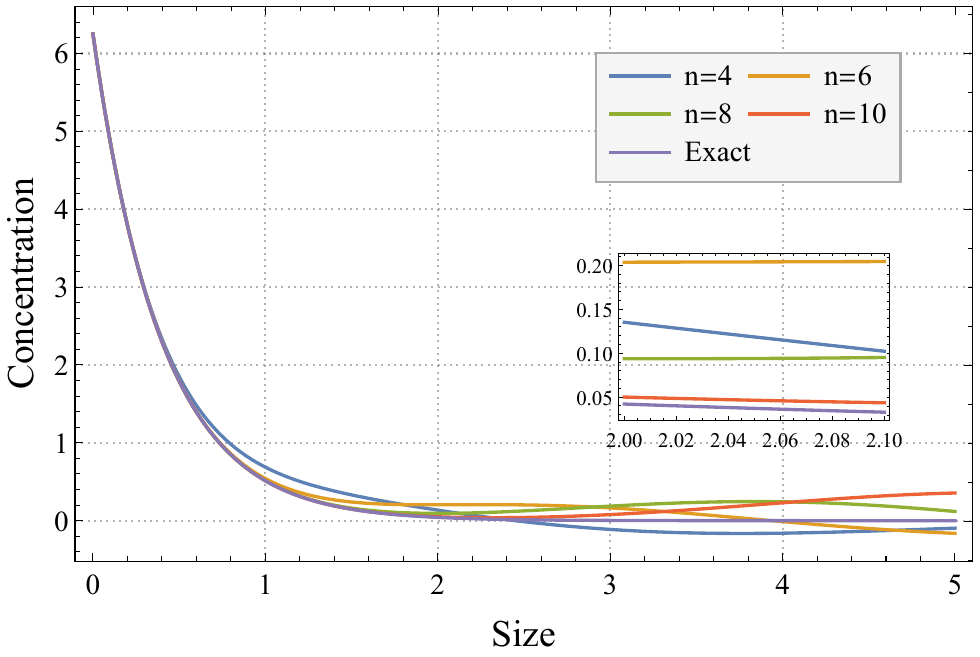}}}
	\subfigure[ ODM at time 0.6]{\includegraphics[width=0.35\textwidth,height=0.255\textwidth]{{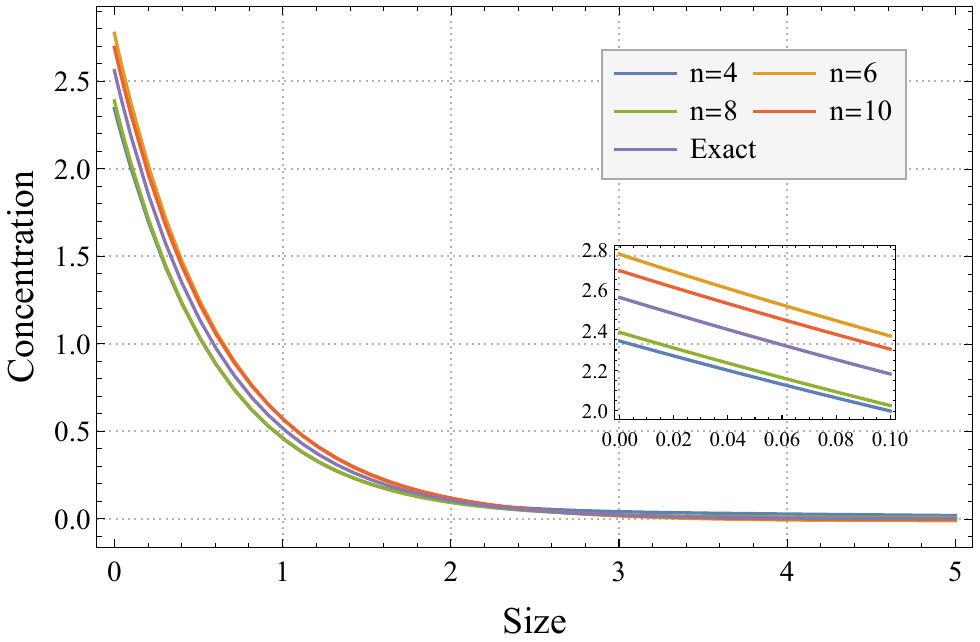}}}
	\caption{Series solutions of VIM, ODM and  exact solution}
	\label{fig0}
	\end{figure}
	Further, to see the excellency of our algorithms, the 3D display of the concentration function is visualized in Figs.4.1(B)(a) and 4.1(B)(b) for $n=10$ at time 0.6 by VIM and ODM, respectively. These plots are almost identical to the exact concentration function given in Fig.4.1(B)(c). However, as time increases to 1, the ODM solution does not have better precision with the analytical solution as compared to the VIM result, see Fig.4.1(C).\\
	
	The noticeable dissimilarity can visualize through the error plots of series and precise solutions. Fig.4.1(D)(a) portrays a minor error in approximate solution by VIM than the ODM outcome in Fig.4.1(D)(b). The same information is also conveyed through the absolute error distribution provided in Table 1 for various values of $n$ and time. The error computations are derived by dividing the size variable domain [0, 5] into subintervals $[\epsilon_{i-1/2}, \epsilon_{i+1/2}],$ where $i=1,2,\ldots,1000.$  The representative  of the $i^{th}$ cell is denoted by the mid-point $\epsilon_{i}=\frac{\epsilon_{i-1/2}+\epsilon_{i+1/2}}{2}.$ Assuming step size $h_i=\epsilon_{i+1/2}-\epsilon_{i-1/2},$ the following rule is operated to estimate the error in VIM and ODM, respectively,
	$$\text{Error}=\sum_{i=1}^{1000}|\varphi_{n}^{i}-f_{i}|h_{i},\hspace{0.5cm} \text{Error}=\sum_{i=1}^{1000}|\psi_{n}^{i}-f_{i}|h_{i},$$ 
	where $\varphi_{n}^{i}=\varphi_{n}(\varsigma,\epsilon_{i})$,\,  $\psi_{n}^{i}=\psi_{n}(\varsigma,\epsilon_{i})$ and $f_{i}=f(\varsigma,\epsilon_{i}).$
	 As expected, the error reduces with the number of terms being increased at a particular time in each case. Moreover, the VIM solution has less error by taking a fixed number of terms at each time, i.e., VIM produces a better approximate solution than the ODM. Finally, Eq.(\ref{parameter}) permits to attain the values of $\gamma_{i}$ for $i=5,6,\ldots,9$ and it is marked from Table 2 that all $\gamma_{i}<1$ which is the required condition of series convergence in VIM.\\
	 \renewcommand{\thefigure}{4.1(B)}
	\begin{figure}[htb!]
	\centering
	\subfigure[VIM Concentration ($\varphi_{10}$) ]{\includegraphics[width=0.30\textwidth,height=0.25\textwidth]{{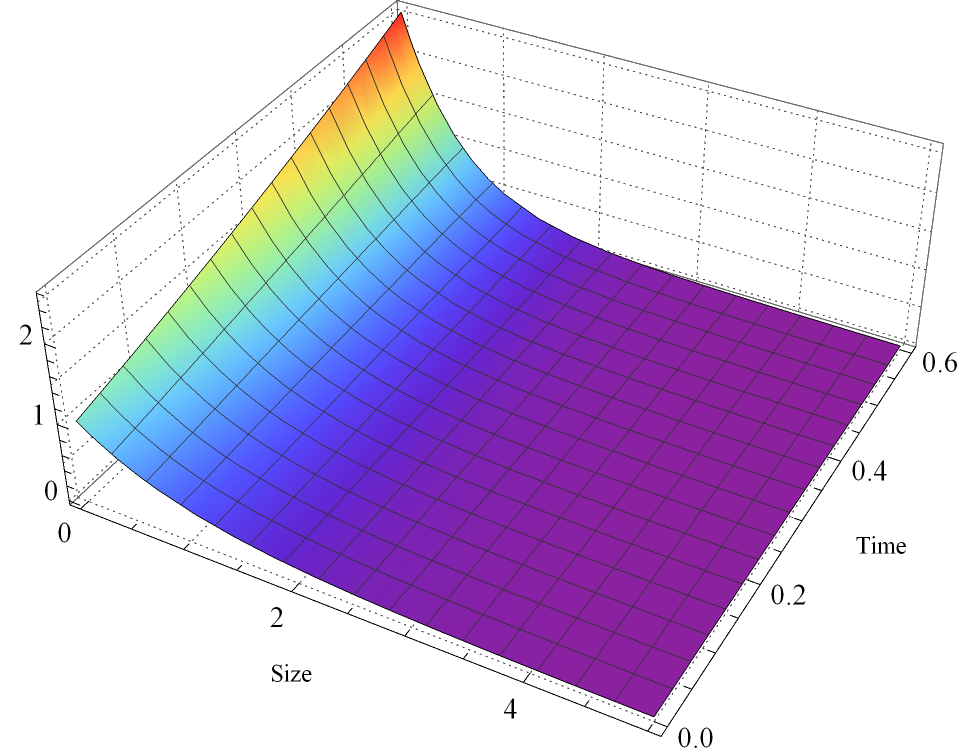}}}
	\subfigure[ODM Concentration   ($\psi_{10}$)]{\includegraphics[width=0.30\textwidth,height=0.25\textwidth]{{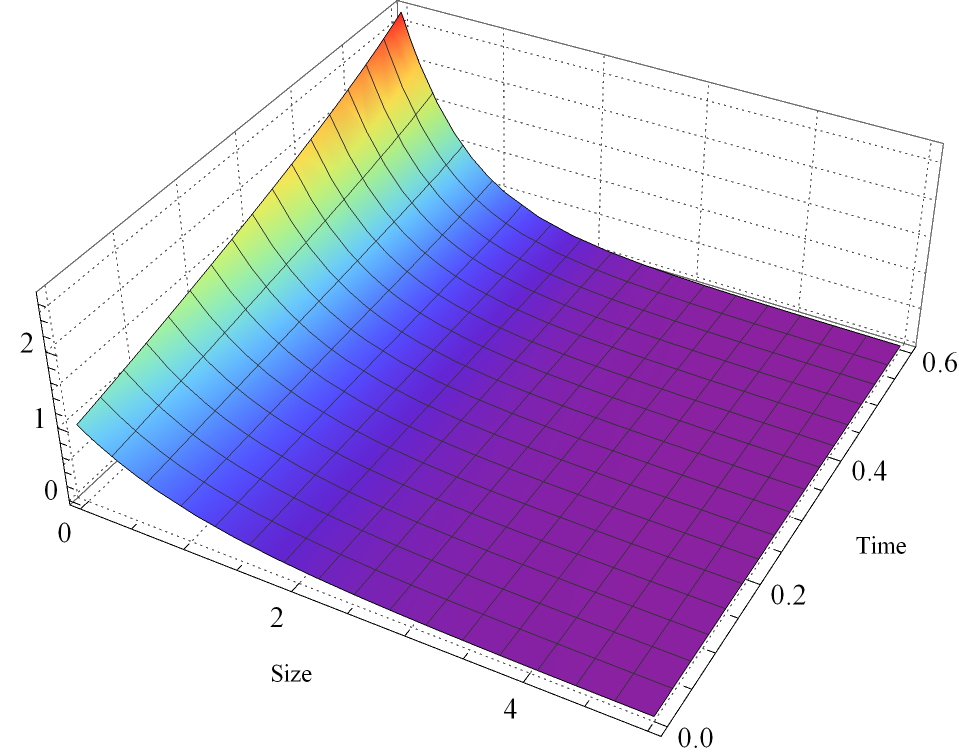}}}
	\subfigure[Exact Concentration]{\includegraphics[width=0.30\textwidth,height=0.25\textwidth]{{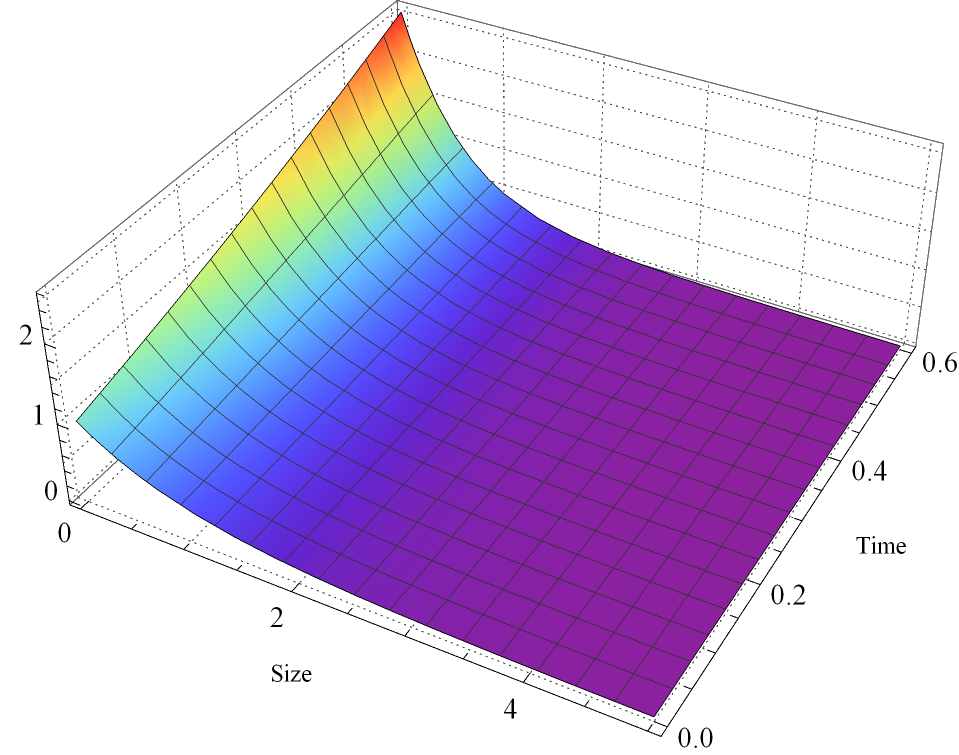}}}
	\caption{ Concentration plots at time $\varsigma$= 0.6 }
	\label{fig1}
	\end{figure}
	 \renewcommand{\thefigure}{4.1(C)}
	\begin{figure}[htb!]
	\centering
	\subfigure[VIM Concentration ($\varphi_{10}$) ]{\includegraphics[width=0.30\textwidth,height=0.25\textwidth]{{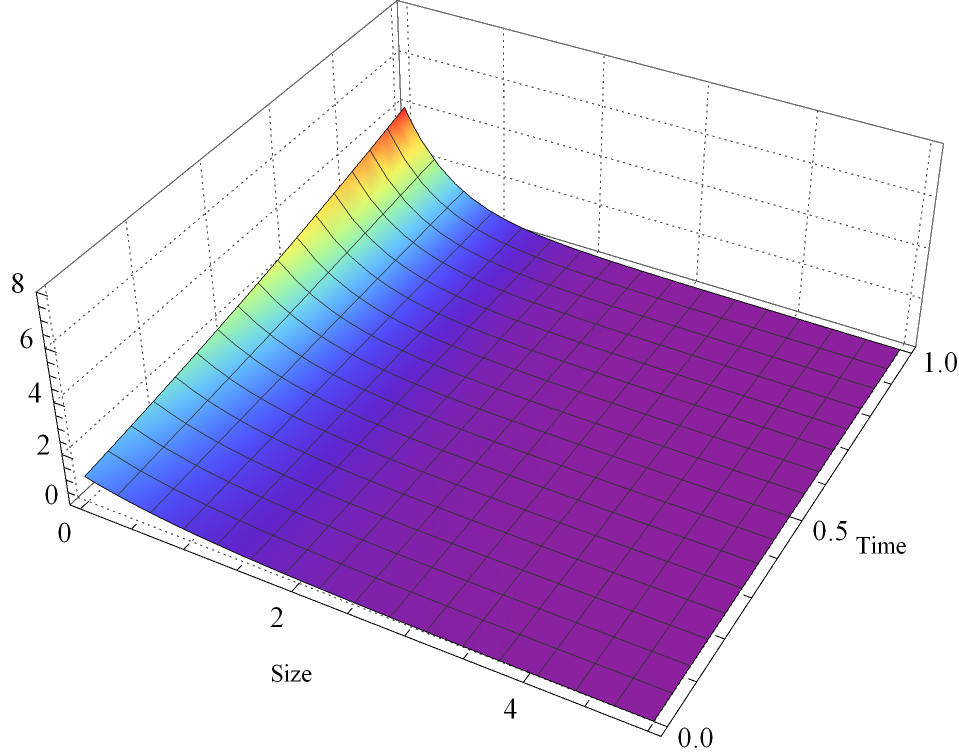}}}
	\subfigure[ODM Concentration   ($\psi_{10}$)]{\includegraphics[width=0.30\textwidth,height=0.25\textwidth]{{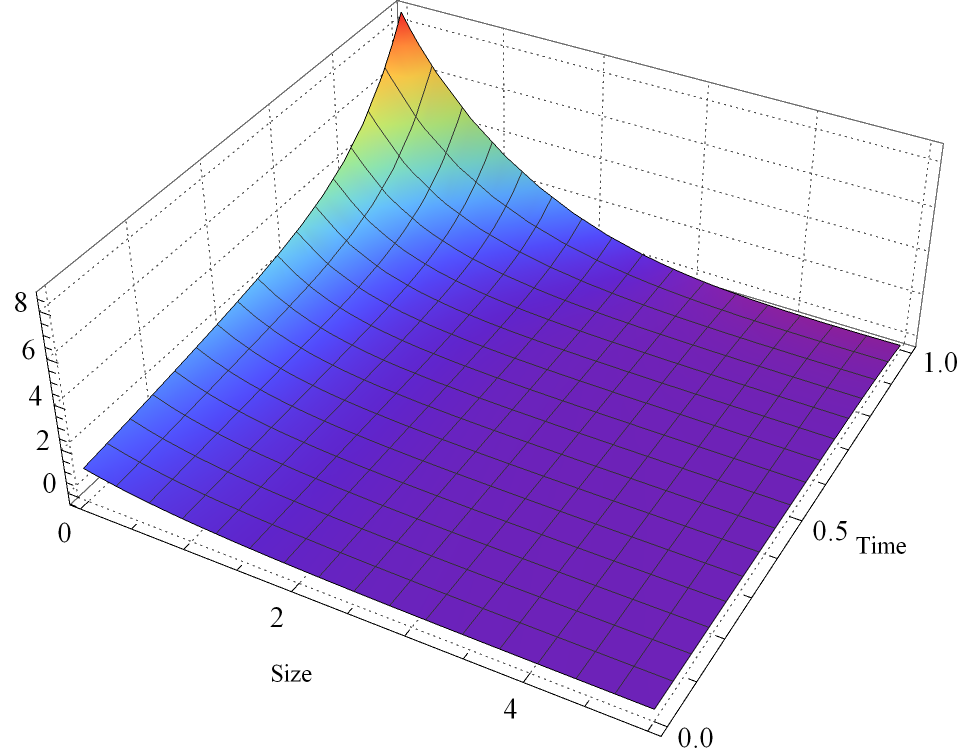}}}
	\subfigure[Exact Concentration]{\includegraphics[width=0.30\textwidth,height=0.25\textwidth]{{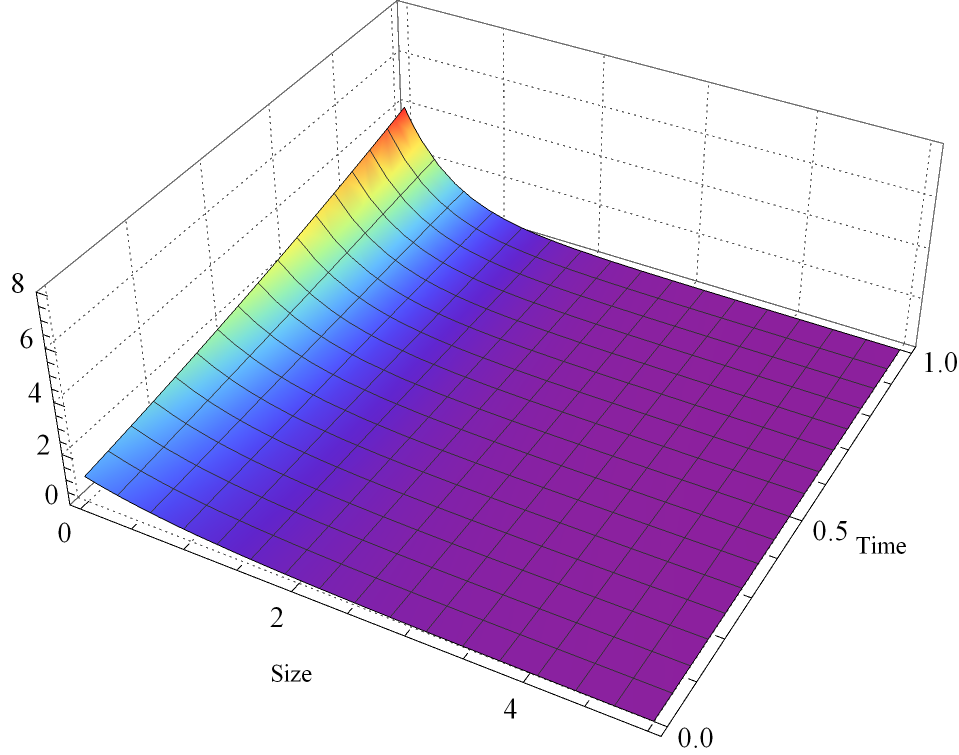}}}
	\caption{ Concentration plots at time $\varsigma$= 1 }
	\label{fig1.1}
	\end{figure}
	 \renewcommand{\thefigure}{4.1(D)}
	\begin{figure}[htb!]
	\centering
	\subfigure[ VIM error]{\includegraphics[width=0.35\textwidth,height=0.255\textwidth]{{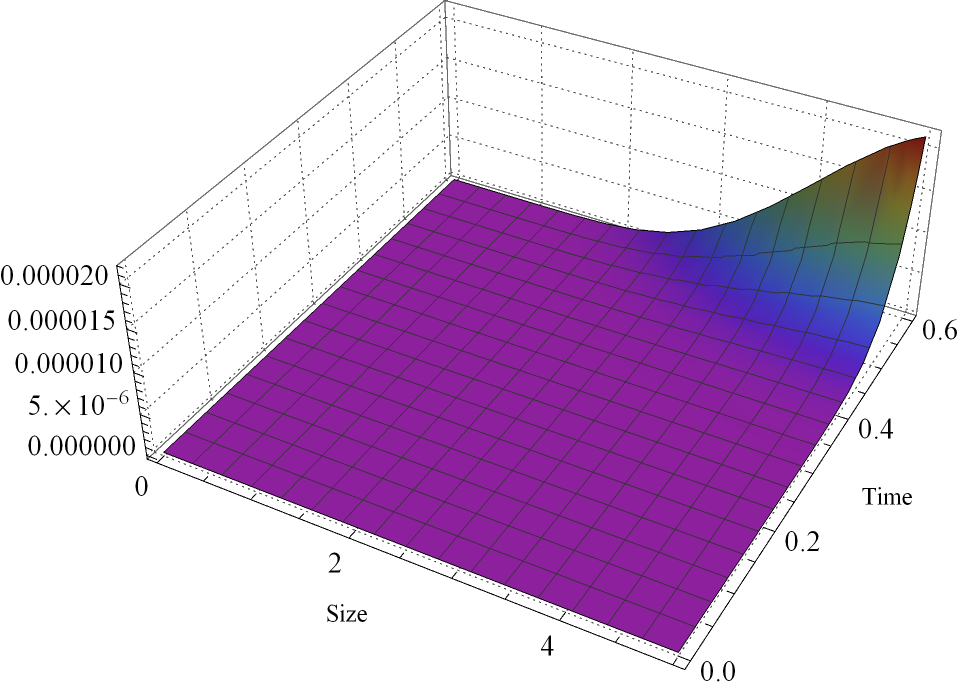}}}
	\subfigure[ ODM error]{\includegraphics[width=0.35\textwidth,height=0.255\textwidth]{{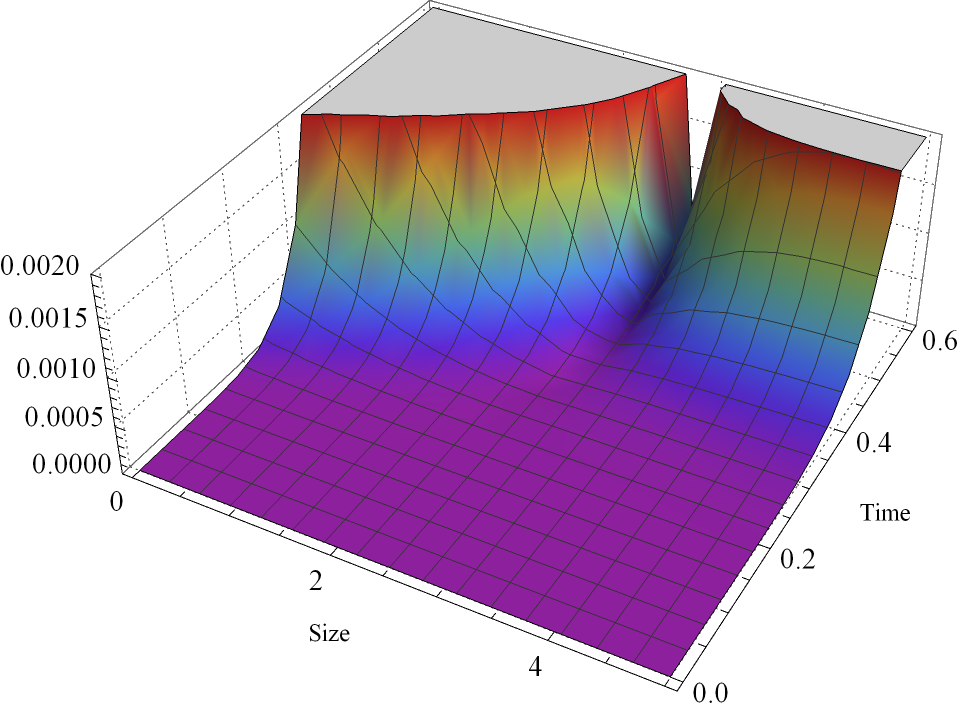}}}
	\caption{ Absolute error between exact solution and  $\varphi_{10}$ as well as $\psi_{10}$}
	\label{fig2}
	\end{figure}
	 \begin{table}[h]
	    \centering
	\begin{tabular}{ |p{0.6cm}|p{1.5cm}|p{2.2cm}|p{2.2cm}|p{2.2cm} |p{2.2cm}|}
	 \hline
	\multirow{2}{*} {$t$}  & \multirow{2}{*} {Methods}  & \multicolumn{4}{|c|}{Number of terms} \\
	 \cline{3-6}
	 & &4 &6 &8 &10 \\ \hline
	0.1 & VIM & 1.8076$\times10^{-6}$ & 1.1147$\times10^{-8}$ & 7.6098$\times10^{-11}$ &4.0914$\times10^{-13}$ \\
	 & ODM & 5.7102$\times10^{-4}$ & 6.5328$\times10^{-5}$ & 6.8206$\times10^{-6}$ &6.6708$\times10^{-7}$ \\\hline
	0.2& VIM  &5.4645$\times10^{-5}$ & 1.3350$\times10^{-6}$&3.6569$\times10^{-8}$ &7.9085$\times10^{-10}$  \\
	&  ODM  &5.3592$\times10^{-3}$ & 1.2903$\times10^{-3}$&2.7836$\times10^{-4}$ &5.7827$\times10^{-5}$  \\\hline
	0.3& VIM &3.9284$\times10^{-4}$ &2.1424$\times10^{-5}$ &1.3232$\times10^{-6}$ &6.4741$\times10^{-8}$  \\
	&  ODM &2.0709$\times10^{-2}$ &7.8754$\times10^{-3}$ &2.7180$\times10^{-3}$ &9.4369$\times10^{-4}$  \\\hline
	0.4 & VIM & 1.5704$\times 10^{-3}$& 1.5126$\times 10^{-4}$ & 1.6630$\times 10^{-5}$ & 1.4540$\times 10^{-6}$\\
	 &  ODM & 5.5200$\times 10^{-2}$& 2.9454$\times 10^{-2}$ & 1.4597$\times 10^{-2}$ & 7.5082$\times 10^{-3}$\\\hline
	0.5& VIM  & 4.5555$\times 10^{-3}$& 6.8176$\times 10^{-4}$ & 1.1722$\times 10^{-4}$ & 1.6092$\times 10^{-5}$\\
	& ODM & 1.1958$\times 10^{-1}$& 8.3816$\times 10^{-2}$ & 5.5852$\times 10^{-2}$ & 3.9088$\times 10^{-2}$\\\hline
	0.6& VIM  & 1.0795$\times 10^{-2}$& 2.3154$\times 10^{-3}$ & 5.7361$\times 10^{-4}$ & 1.1389$\times 10^{-4}$\\
	&  ODM  & 2.2671$\times 10^{-1}$& 2.0006$\times 10^{-1}$ & 1.7113$\times 10^{-1}$ & 1.5330$\times 10^{-1}$\\\hline
	\end{tabular}
	 \caption{Error distribution at different level of time for $n$=4,6,8,10}
	    \label{tab:my_label1}
	\end{table}
	\begin{table}
	 \centering
	\begin{tabular}{ |p{1cm}|p{1cm}|p{1cm}|p{1cm} |p{1cm}|} \hline
	$\gamma_{5}$ & $\gamma_{6}$ &  $\gamma_{7}$ & $\gamma_{8}$ & $\gamma_{9}$ \\ \hline
	0.4513 & 0.4631  & 0.4299 & 0.5625 &0.4142 \\ \hline
	\end{tabular}
	 \caption{Different values of $\gamma_{i}$}
	    \label{tab:my_labe111}
	\end{table}
	
	 \renewcommand{\thefigure}{4.1(E)}
	\begin{figure}[htb!]
	\centering
	\subfigure[Zeorth Moment]{\includegraphics[width=0.30\textwidth,height=0.25\textwidth]{{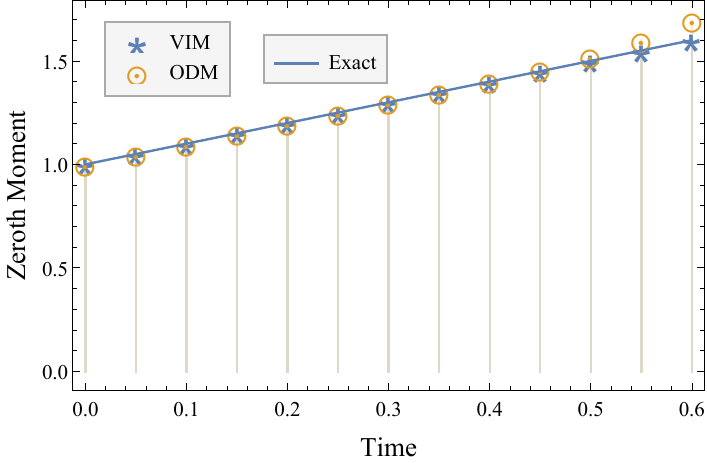}}}
	\subfigure[First Moment]{\includegraphics[width=0.30\textwidth,height=0.25\textwidth]{{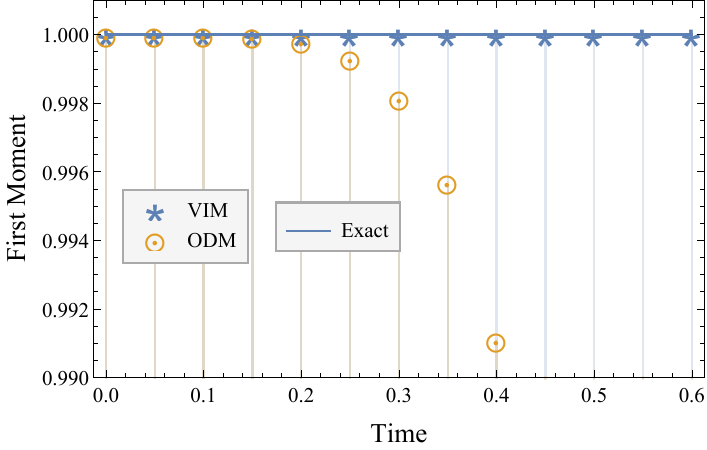}}}
	\subfigure[Second Moment]{\includegraphics[width=0.30\textwidth,height=0.25\textwidth]{{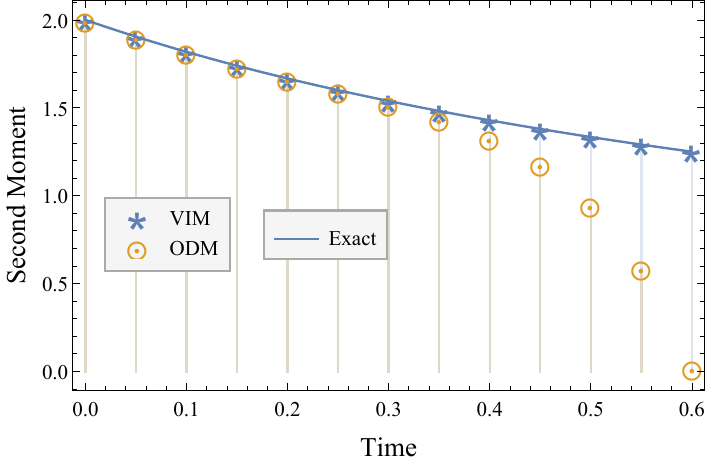}}}
	\caption{Moments comparision: VIM, ODM and Exact  }
	\label{fig3}
	\end{figure}
	
	Proceeding further, the method's efficacy is also affirmed through the comparison between approximated moments of VIM and ODM with the exact moments. Fig.4.1(E)(a) represents the zeroth moment of the approximated solutions $\varphi_{10}$ and $\psi_{10}$ along with the exact zeroth moment. It authenticates that the total number of particles increases as time progresses and the ODM moment commences to slip away from VIM and precise ones around $\varsigma=0.5$. In addition, the first and second moments in Figs.4.1(E)(b) and 4.1(E)(c) are shown and these graphs display that moments using 10-term ODM solutions are not sufficient enough to predict the exact ones in either case as it starts deviating soon after the simulation begins. Notice that a 10-term ODM solution was good to approximate the concentration function for time 0.6 in Fig.4.1(B). So, it is not obvious that if a method delivers good accuracy with solutions, it can also provide similar behaviour with the moments. From these figures, it is certain that VIM moments using $\varphi_{10}$ have better approximation with the exact. The extraction of the above results lies in the significance of VIM over ODM in solving the CBE.
	
	 \begin{exmp}
		 	 Let us consider  Eqs.(\ref{maineq}-\ref{initial}) with kernels $K(\epsilon,\rho)=\frac{\epsilon\rho}{20}$, $b(\epsilon,\rho,\sigma)=\frac{2}{\rho}$ and initial data  $f^{in}(\epsilon)={\epsilon}^2\exp(-\epsilon).$ The analytical solution for the concentration is hard to compute, however, the precise formulations for the zeroth and the first moments are $$M_0(\varsigma)=2+\frac{9\varsigma}{5},\quad \text{and}\quad M_1(\varsigma)=6,$$ respectively. These moments can be generated by multiplying Eq.(\ref{maineq}) by 1 and $\epsilon$ as well as integrating from 0 to $\infty$ over $\epsilon$. Substituting these kernel parameters and initial condition in Eq.(\ref{cvim2}), we have  
	\end{exmp}	
		 	\begin{align*}
		 		 A[f]=&\int_{0}^{\varsigma}\biggl(-\frac{\partial f_k(\tau,\epsilon)}{\partial \tau}+\int_0^\infty\int_{\epsilon}^{\infty} \frac{\sigma}{10}f_k(\tau,\rho)f_k(\tau,\sigma)\,d\rho\,d\sigma \nonumber -\int_{0}^{\infty}\frac{\epsilon\rho}{20}f_k(\tau,\epsilon)f_k(\tau,\rho)\,d\rho\biggl) d \tau,
		 		 \end{align*}
		 		 for the VIM, and so, the following first few terms of the series solution are computed using Eq.(\ref{solutionterm1})
		 		 \begin{align*}
		 		 f_{0}(\varsigma,\epsilon)=&{\epsilon}^2e^{-\epsilon},\quad
		 		      f_{1}(\varsigma,\epsilon)=\frac{3}{10}e^{-\epsilon}\varsigma(4+\epsilon(4-(-2+\epsilon)\epsilon)),\\
		 		       f_{2}(\varsigma,\epsilon)=&\frac{9}{200}e^{-\epsilon}{\varsigma}^{2}(12+{\epsilon}^{2}(-6+(-4+\epsilon)\epsilon)),\\
		 		    f_{3}(\varsigma,\epsilon)=&-\frac{9e^{-\epsilon}{\varsigma}^{3}\epsilon(36+\epsilon(12+\epsilon(-6+(-6+\epsilon)\epsilon)))}{2000}, \\
		 		    f_{4}(\varsigma,\epsilon)=&\frac{27e^{-\epsilon}{\varsigma}^{4}{\epsilon}^{2}(72+(-8+\epsilon)(-2+\epsilon)\epsilon(2+\epsilon))}{80000},\\
		 		    \intertext{and}
		 		    f_{5}(\varsigma,\epsilon)=&-\frac{81e^{-\epsilon}{\varsigma}^{5}{\epsilon}^{3}(120+60\epsilon-10{\epsilon}^3+{\epsilon}^{4})}{4000000}.
		 		 \end{align*} 
		 		The subsequent terms may be calculated using Eq.(\ref{solutionterm1}) and with the help of MATHEMATICA. For the numerical comparison, we have taken the results up to 10-terms solution.\\
		 		
	Furthermore, the ODM technique is being applied to this example to get $C(\epsilon)=-\frac{3\epsilon}{10}$ using Eq.(\ref{codm2}) and co-operation of Eq.(\ref{codm1}) as well as Eqs.(\ref{f1}-\ref{fk}), the following series terms are obtained
		 		\begin{align*}
		 		 f_{0}(\varsigma,\epsilon)&=e^{-\epsilon},\quad
		 			      f_{1}(\varsigma,\epsilon)=\frac{3}{10} \varsigma e^{-\epsilon} (\epsilon (4-(\epsilon-2) \epsilon)+4),\\
		 			       f_{2}(\varsigma,\epsilon)&=-\frac{9}{100}  {\varsigma}^2 e^{-\epsilon} \left(\epsilon \left({\epsilon}^2+\epsilon-2\right)-6\right),\\
		 			    f_{3}(\varsigma,\epsilon)&=\frac{1}{1000}((3 e^{-\epsilon} {\varsigma}^2 (4 \varsigma (-10 + \epsilon (-19 + (-11 + \epsilon) \epsilon)) + 
		 			       15 \epsilon (-4 + \epsilon (-4 + (-2 + \epsilon) \epsilon))))), \\
		 			    f_{4}(\varsigma,\epsilon)&=\frac{1}{20000}(e^{-\epsilon} \left(60 {\varsigma}^3 (\epsilon (\epsilon (\epsilon (6 \epsilon-1)+20)+22)+40)-9 {\varsigma}^4 (\epsilon (\epsilon (\epsilon (15 \epsilon-94)+2)+260)+480)\right)).
		 		\end{align*}	 
		Thanks to MATHEMATICA, it is possible to calculate  the  additional higher terms of the series solution $\psi_{14}$. As the precise concentration function is not there, Fig.4.2(A) plots the absolute difference of consecutive iteration terms at time $\varsigma=1$ and the simulations are summarized for VIM and ODM. It can be observed that the behaviour of the differences is diminishing and  $|f_{10}-f_{9}|$ is near to zero in VIM while $|f_{14}-f_{13}|$ in ODM is still higher than this. Therefore, the series solution is truncated to 10-term $(\varphi_{10})$ in case of VIM and  a 14-term series solution is taken in ODM to anticipate more favourable outcomes. The estimated concentration functions derived by VIM ($\varphi_{10}$) and ODM ($\psi_{14}$) for time 1.5 are shown in Fig.4.2(B), and they are  comparable. The convergence of the VIM solution $(\varphi_{10})$ is further validated by providing the values of  $\gamma_i$ parameters at time 1.5 in Table 3.
			\begin{table}
		 \centering
		\begin{tabular}{ |p{1cm}|p{1cm}|p{1cm}|p{1cm} |p{1cm}|} \hline
		$\gamma_{5}$ & $\gamma_{6}$ &  $\gamma_{7}$ & $\gamma_{8}$ & $\gamma_{9}$ \\ \hline
		0.4212 & 0.4317  & 0.4381 & 0.4323 & 0.3525\\ \hline
		\end{tabular}
		 \caption{Different values of $\gamma_{i}$ at time 1.5}
		    \label{tab:my_labe1111}
		\end{table}
			\renewcommand{\thefigure}{4.2(A)}
		\begin{figure}[htb!]
		\centering
				\subfigure[ VIM  Coefficients]{\includegraphics[width=0.35\textwidth,height=0.255\textwidth]{{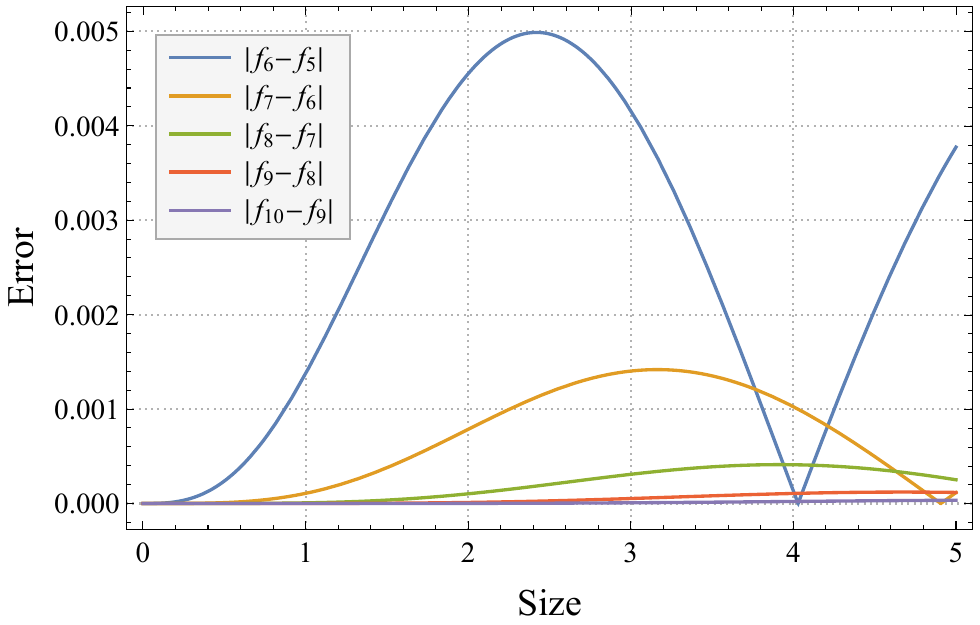}}}
				\subfigure[ ODM Coefficients ]{\includegraphics[width=0.35\textwidth,height=0.255\textwidth]{{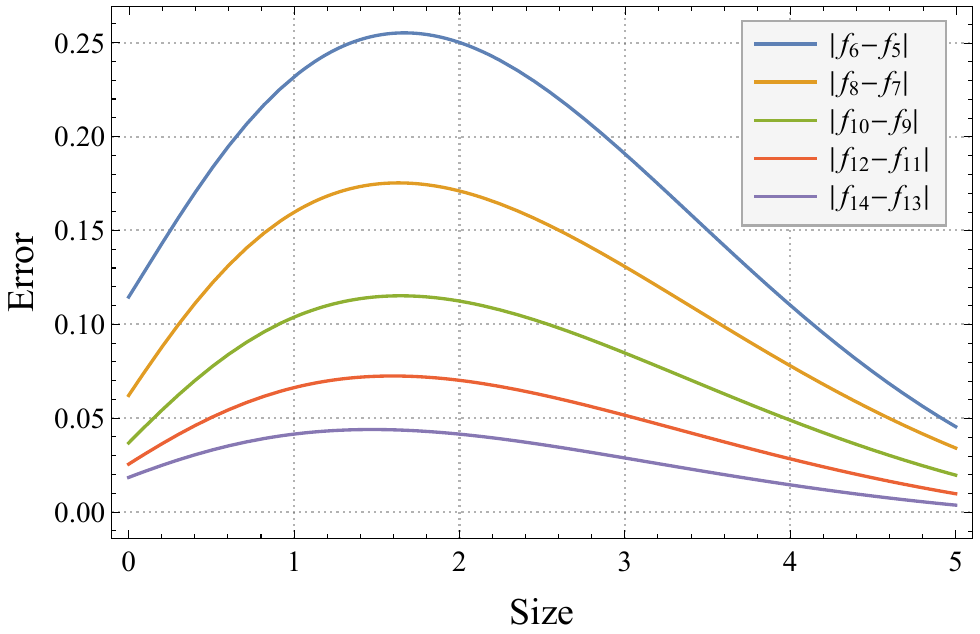}}}
				 	 	 	\caption{Absolute difference of series coefficients at time 1.0}
				 	 	 	\label{fig10}
					 	\end{figure} 
	 	 	  Further, to justify the novelty of our proposed schemes, the zeroth and the first moments of VIM using $\varphi_{10}$ and ODM using $\psi_{14}$ are plotted against the analytical moments in Fig.4.2(C). It is observed that the VIM results exactly overlap with the exact moments, whereas the ODM results start over-predicting and lower-predicting around time $\varsigma=1.2$. Observably, VIM yields findings that resemble analytic solutions. Therefore, one can expect the accurate result for the second moment as well by VIM and hence,  given in Fig.4.2(C)(c). The second moment of ODM missteps away from VIM around $\varsigma=1.2.$ The overall results authenticate that the VIM technique supplies better estimation than ODM. 
			  \renewcommand{\thefigure}{4.2(B)}
		\begin{figure}[htb!]
		\centering
	\subfigure[ VIM solution]{\includegraphics[width=0.35\textwidth,height=0.255\textwidth]{{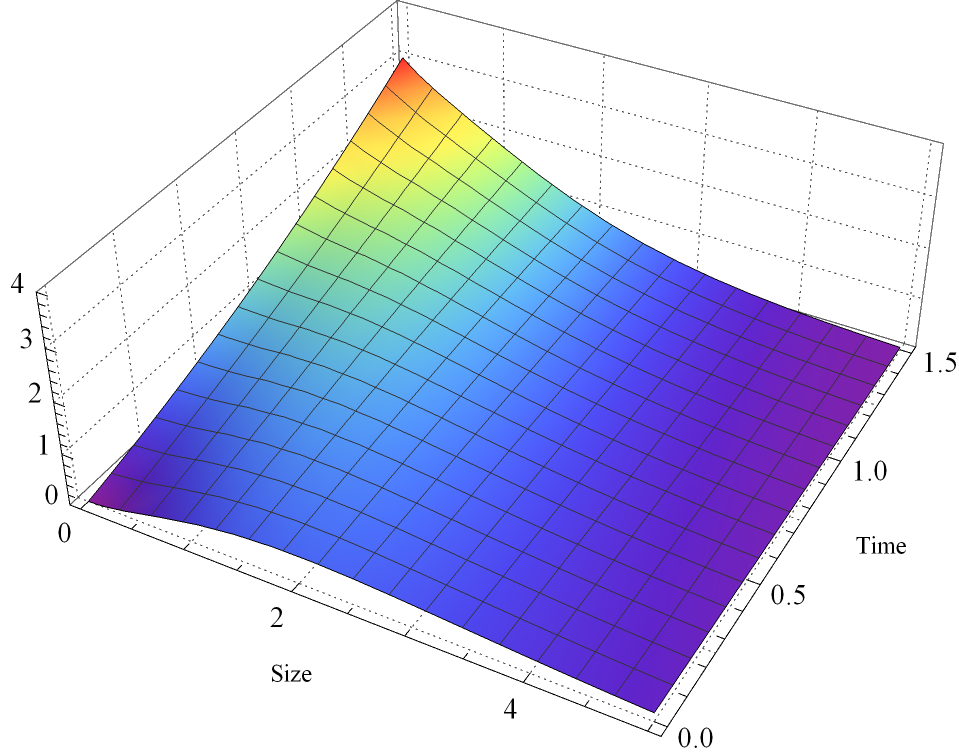}}}
	\subfigure[ ODM solution ]{\includegraphics[width=0.35\textwidth,height=0.255\textwidth]{{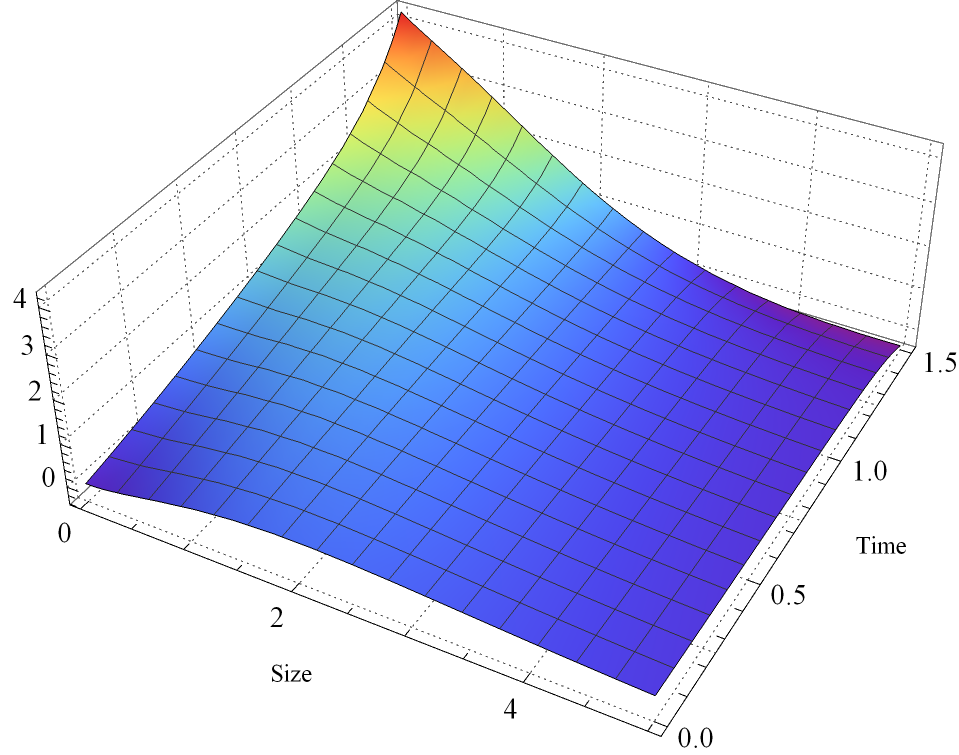}}}
					 	\caption{Plots of series solution with $\varphi_{10}$ and $\psi_{14}$ at time 1.5}
					 	\label{fig11}
					 	\end{figure} 
					 	
		\renewcommand{\thefigure}{4.2(C)}
		\begin{figure}[htb!]
		\centering
	\subfigure[Zeorth Moment]{\includegraphics[width=0.3\textwidth,height=0.25\textwidth]{{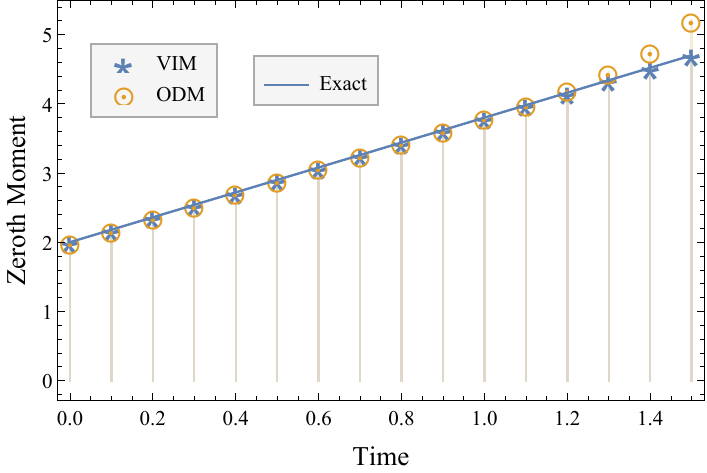}}}
		\subfigure[First Moment]{\includegraphics[width=0.3\textwidth,height=0.25\textwidth]{{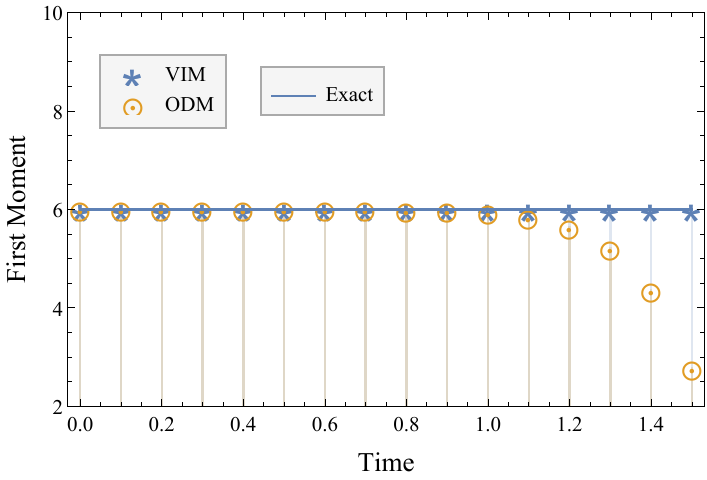}}}
	  \subfigure[Second Moment]{\includegraphics[width=0.3\textwidth,height=0.25\textwidth]{{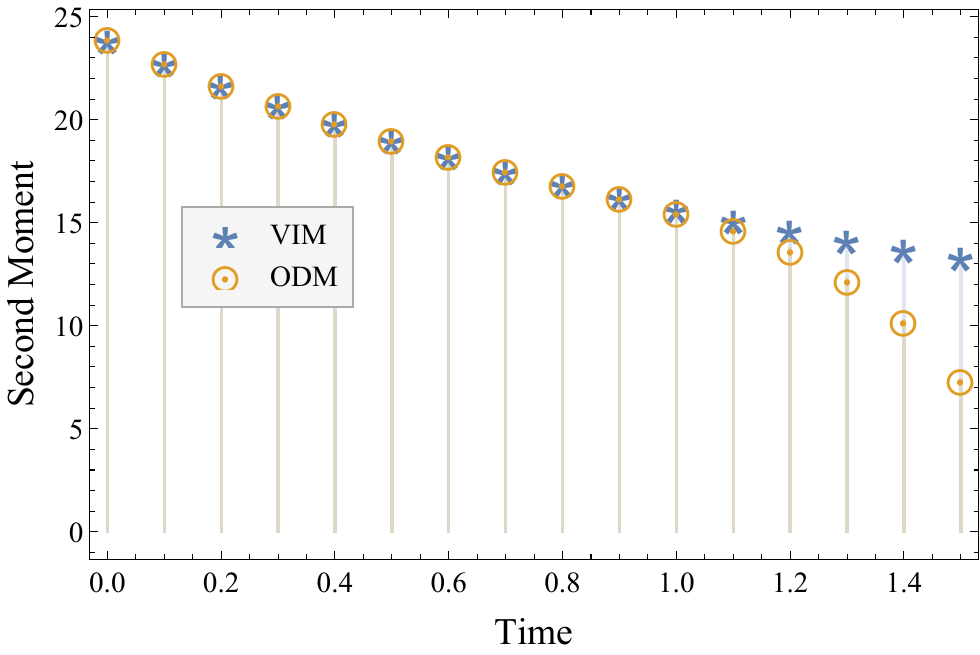}}}
		\caption{Moments comparision: VIM ($\varphi_{10}$), ODM ($\psi_{14}$) and Exact at time 1.5 }
					 			 	 \label{fig9}
					 			 	 \end{figure}
					 			 	 	\begin{exmp}
					 			 	 	Consider again Eqs.(\ref{maineq}-\ref{initial}) with $K(\epsilon,\rho)=1$ and $b(\epsilon,\rho,\sigma)=\delta(\epsilon-0.4\rho)+\delta(\epsilon-0.6\rho)$ along with initial data $f^{in}(\epsilon)=\exp(-\epsilon)$. The exact solution for the concentration does not exist in the literature, but it is easy to determine the exact expression for the first three moments, those are
					 			 	 $$M_0(\varsigma)=\frac{1}{1-\varsigma},\quad  M_1(\varsigma)= 1\quad \text{and}\quad M_2(\varsigma)=2(1-\varsigma)^{0.48}.$$ So, exclusively an approximate solution can be obtained via VIM and ODM. The VIM approach (Eqs.(\ref{cvim2}) and (\ref{solutionterm1})) offers the operator $A$ 
					 			 	 	\end{exmp}
					 			 	 	 \begin{align*}
					 			 	 		 A[f]=&\int_{0}^{\varsigma}\biggl(-\frac{\partial f_k(\tau,\epsilon)}{\partial \tau}+\int_0^\infty\int_{\epsilon}^{\infty} \big(\delta(\epsilon-0.4\rho)+\delta(\epsilon-0.6\rho)\big)f_k(\tau,\rho)f_k(\tau,\sigma)\,d\rho\,d\sigma  \\ &-\int_{0}^{\infty}f_k(\tau,\epsilon)f_k(\tau,\rho)\,d\rho\biggl) d \tau,
					 			 	 		 \end{align*}
					 			 	 		with the following series terms
					 			 	 		  \begin{align*}
					 			 	 		 	 f_{0}(\varsigma,\epsilon)=& e^{-\epsilon},\quad 
					 			 	 		 	      f_{1}(\varsigma,\epsilon)=\epsilon\Big(-e^{-\varsigma}+1.67e^{-1.67\varsigma}\theta(0.67\varsigma)+2.5e^{-2.5\varsigma}\theta(1.5\varsigma)\Big),\\
					 			 	 		 	       f_{2}(\varsigma,\epsilon)=&-0.5e^{-14.19\varsigma}{\epsilon}^{2}\Big(e^{13.19\varsigma}+\theta(0.67\varsigma)(-2.78e^{11.42\varsigma}\theta(1.11\varsigma)\\
					 			 	 		 	       &-4.17e^{10.03\varsigma}\theta(2.5\varsigma))
					 			 	 		 	        +\theta(1.5\varsigma)(-4.17e^{10.03\varsigma}\theta(1.67\varsigma)-6.25e^{7.944\varsigma}\theta(3.75\varsigma))\Big),
					 			 	 		 	 \end{align*}
					 			 	 		 	 where $\delta$ and $\theta$ are Dirac's delta and Heaviside step functions, respectively.\\
					 			 	 
					 			 	 		For the implementation of ODM, having $C(\epsilon)=-1$ for this case, the following series terms are obtained using Eqs.(\ref{codm2}) and (\ref{f1}-\ref{fk})
					 			 	 		 	\begin{align*}
					 			 	 		 	 f_{0}(\varsigma,\epsilon)=&e^{-\epsilon},\quad
					 			 	 		 		      f_{1}(\varsigma,\epsilon)=\varsigma(-e^{-\epsilon}+1.67e^{-1.67\epsilon}\theta[0.67\epsilon]+2.5e^{-2.5\epsilon}\theta[1.5\epsilon]),\\
					 			 	 		 		       f_{2}(\varsigma,\epsilon)=&-0.5e^{-14.19\epsilon}{\varsigma}^2\Big(e^{13.19\epsilon}+\theta[0.67\epsilon]\Big(-2.78e^{11.42\epsilon}\theta[1.11\epsilon]\\
					 			 	 		 		       &-4.17e^{10.03\epsilon}\theta[2.5\epsilon]\Big)+\theta[1.5\epsilon]\Big(-4.17e^{10.03\epsilon}\theta[1.67\epsilon]-6.25e^{7.94\epsilon}\theta[3.75\epsilon]\Big)\Big).
					 			 	 		 	\end{align*}
					 			 	 		 	 The third term is not easy to write due to lengthy expressions in both situations, and in fact, it is difficult to compute the higher series terms by MATHEMATICA due to the complexity of the model and kernels. It consumes a significant amount of time and therefore, we consider $\varphi_{3}$ and $\psi_3$ as approximate solutions for the numerical validation. Since, analytical solution is not available for the concentration function, absolute differences between successive series terms are made up to 3-terms, see Fig.4.3(A). Initially, the differences have minor gap but as size increases the error tends to zero. Also, it is clear that the solution using 3-terms series result is reliable enough. In Fig.4.3(B), series solutions $\varphi_{3}$ and $\psi_{3}$ are almost identical at time 0.2.\\
					 			 	 
					 			 	 Further, to see the efficiency of one algorithm over other, the comparison of approximated and analytical moments are made.  In Fig.4.3(C)(a), the zeroth moment of $\varphi_3$ and $\psi_3$ are presented with the exact one at $\varsigma$=0.6. The ODM moment does not agree with the exact one after $\varsigma=$ 0.5. In contrast, VIM provides better accuracy at this time. As expected, mass is conserved and this is also visualized from the results for the first moment using VIM and ODM in Fig.4.3(C)(b). Both methods provide suitable precision with the exact mass at $\varsigma=0.6$. Finally, Fig.4.3(C)(c) depicts the second moments of VIM and ODM with VIM outperforming ODM in estimation. Hence, it is concluded that the VIM moments are closest to the actual ones.
					 			 	 		 	  \renewcommand{\thefigure}{4.3(A)}
					 			 	 		 	 	 	\begin{figure}[htb!]
					 			 	 		 	 	 	\centering
					 			 	 		 	 	 	\subfigure[ VIM  coefficients]{\includegraphics[width=0.35\textwidth,height=0.255\textwidth]{{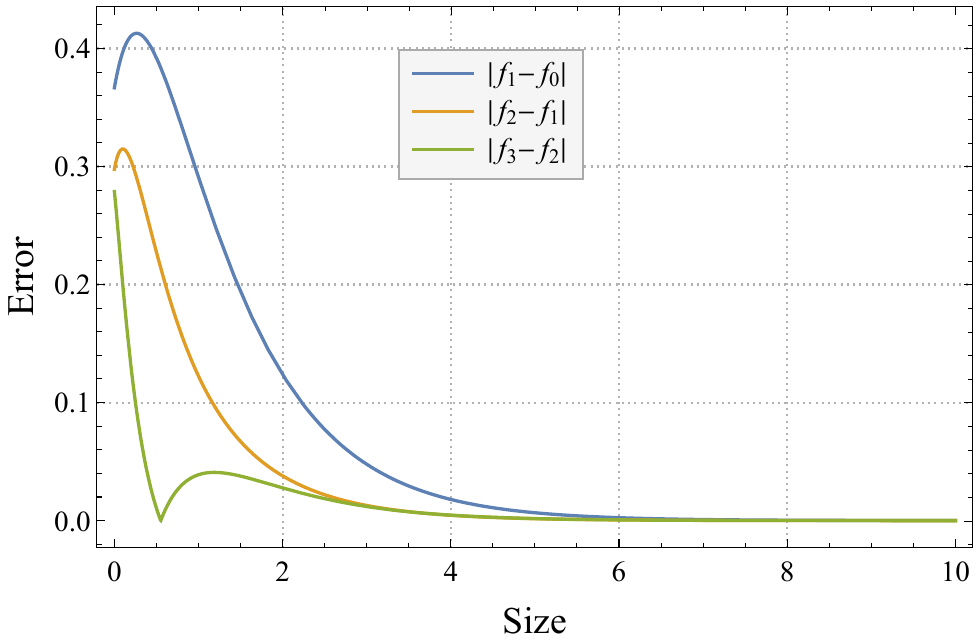}}}
					 			 	 		 	 	 	\subfigure[ ODM coefficients ]{\includegraphics[width=0.35\textwidth,height=0.255\textwidth]{{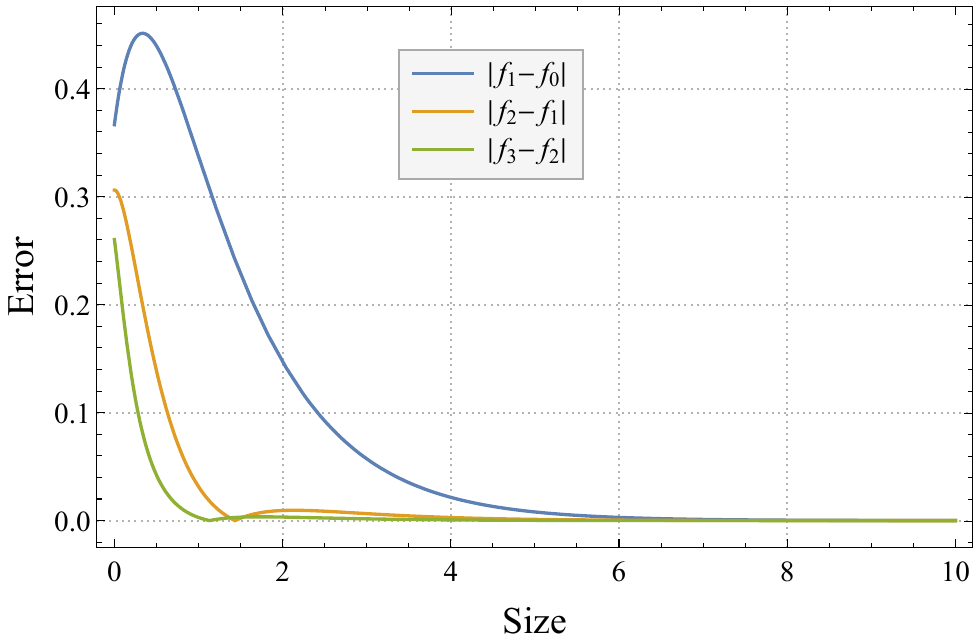}}}
					 			 	 		 	 	 	\caption{Absolute difference of series coefficients at time 0.2}
					 			 	 		 	 	 	\label{fig6}
					 			 	 		 	 	 	\end{figure} 
					 			 	 		 	  \renewcommand{\thefigure}{4.3(B)}
					 			 	 		 	\begin{figure}[htb!]
					 			 	 		 	\centering
					 			 	 		 	\subfigure[ VIM solution]{\includegraphics[width=0.35\textwidth,height=0.255\textwidth]{{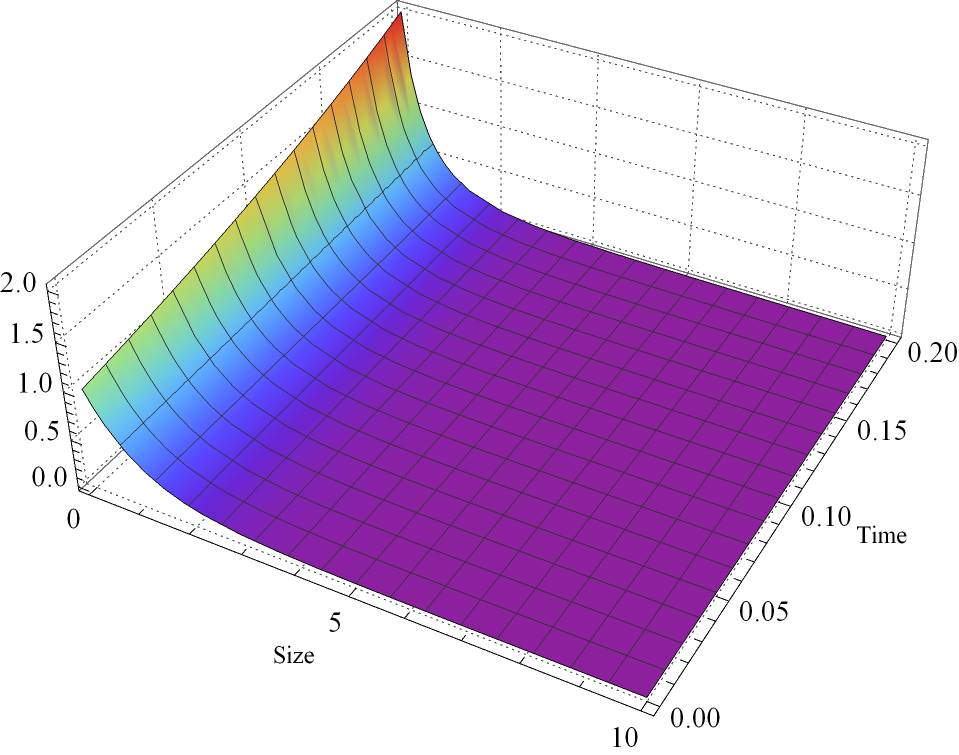}}}
					 			 	 		 	\subfigure[ ODM solution ]{\includegraphics[width=0.35\textwidth,height=0.255\textwidth]{{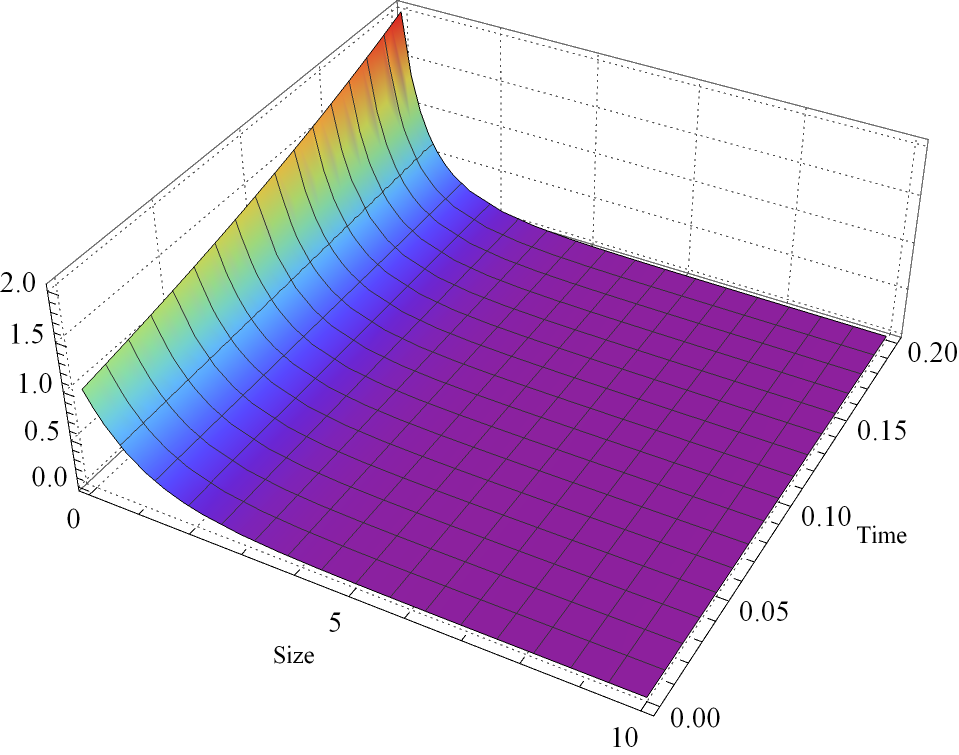}}}
					 			 	 		 	\caption{Plots of series solution with $\varphi_{3}$ and $\psi_{3}$}
					 			 	 		 	\label{fig7}
					 			 	 		 	\end{figure} 
					 			 	 		
					 			 	 		 	  \renewcommand{\thefigure}{4.3(C)}
					 			 	 		 	 \begin{figure}[htb!]
					 			 	 		 	 \centering
					 			 	 		 	 \subfigure[Zeorth Moment]{\includegraphics[width=0.3\textwidth,height=0.25\textwidth]{{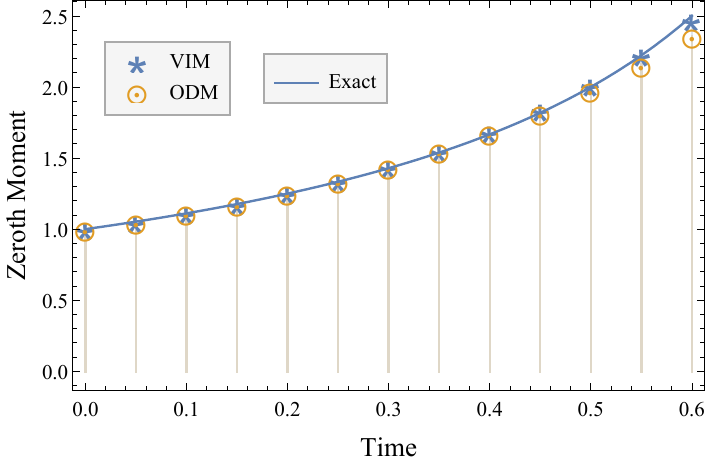}}}
					 			 	 		 	 \subfigure[First Moment]{\includegraphics[width=0.3\textwidth,height=0.25\textwidth]{{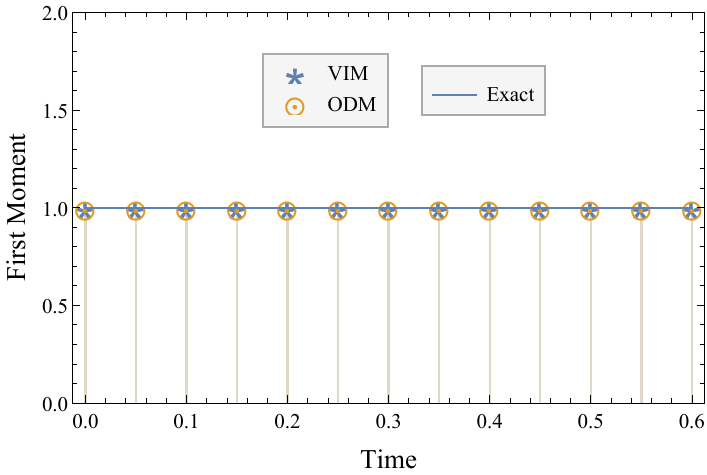}}}
					 			 	 		 	 \subfigure[Second Moment]{\includegraphics[width=0.3\textwidth,height=0.25\textwidth]{{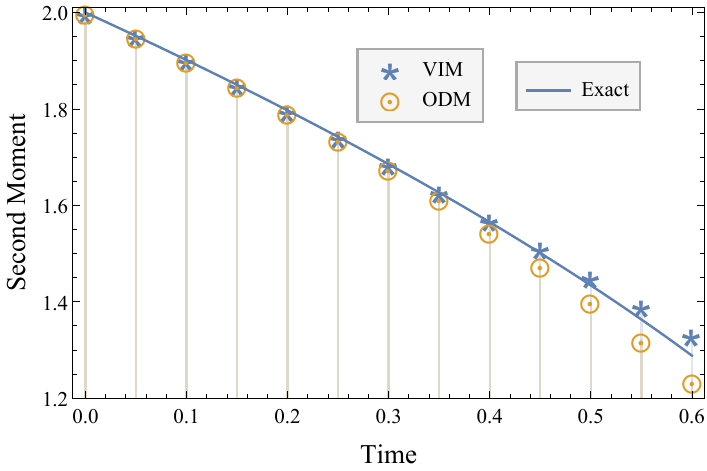}}}
					 			 	 		 	 \caption{Moments comparision: VIM ($\varphi_3$), ODM ($\psi_3$) and Exact at time 0.6 }
					 			 	 		 	 \label{fig9}
					 			 	 		 	 \end{figure}		 	 	 	
					 			 	 	\section{Conclusions}\label{conclusions} Semi-analytical VIM and ODM techniques were implemented on the non-linear collision-induced breakage equation in this work. These methods provided a recursive algorithm for calculating the closed form of analytical solution or an approximative results using finite term series approximations. The theory of convergence result for VIM was taken from \cite{odibat2010study} while in ODM, the contraction mapping theorem was used to show the series convergence in detail. The discussion was reliable enough to estimate the maximum absolute truncated error. The applicability and accuracy of these methods were shown by considering three different test cases. The approximated solutions and various moments were compared against the analytical solutions and error graphs were provided. Interestingly, in one case, we got the closed form of solution via VIM. In each example, VIM's solution demonstrated superior long-term compliance with analytical requirements, whereas ODM's solution failed to do so. Moreover, the outcomes ensured that VIM is superior to ODM. It was also an important observation to mention that the moment plots were more precise than concentration plots over extended time periods.

	\section{Acknowledgments}
	The first author work is supported by CSIR India, which provides the PhD fellowship and the file No. is 1157/CSIR-UGC NET June 2019.
		 	
\bibliography{semiref}

\begin{thebibliography}{10}

\bibitem{brilliantov2015size}
N.~Brilliantov, P.~Krapivsky, A.~Bodrova, F.~Spahn, H.~Hayakawa, V.~Stadnichuk, and J.~Schmidt, ``Size distribution of particles in saturn' rings from aggregation and fragmentation,'' {\em Proceedings of the National Academy of Sciences}, vol.~112, no.~31, pp.~9536--9541, 2015.

\bibitem{wei2019graphite}
M.~Wei, Y.~Zhang, Z.~Fang, X.~Wu, and L.~Sun, ``Graphite aerosol release to the containment in a water ingress accident of high temperature gas-cooled reactor (htgr),'' {\em Nuclear Engineering and Design}, vol.~342, pp.~170--175, 2019.

\bibitem{chen2020collision}
S.~Chen and S.~Li, ``Collision-induced breakage of agglomerates in homogenous isotropic turbulence laden with adhesive particles,'' {\em Journal of Fluid Mechanics}, vol.~902, 2020.

\bibitem{cheng1988scaling}
Z.~Cheng and S.~Redner, ``Scaling theory of fragmentation,'' {\em Physical Review Letters}, vol.~60, no.~24, p.~2450, 1988.

\bibitem{kostoglou2000study}
M.~Kostoglou and A.~Karabelas, ``A study of the nonlinear breakage equation: analytical and asymptotic solutions,'' {\em Journal of Physics A: Mathematical and General}, vol.~33, no.~6, p.~1221, 2000.

\bibitem{ziff1991new}
R.~M. Ziff, ``New solutions to the fragmentation equation,'' {\em Journal of Physics A: Mathematical and General}, vol.~24, no.~12, p.~2821, 1991.

\bibitem{dubovskii1992exact}
P.~Dubovskii, V.~Galkin, and I.~Stewart, ``Exact solutions for the coagulation-fragmentation equation,'' {\em Journal of Physics A: Mathematical and General}, vol.~25, no.~18, p.~4737, 1992.

\bibitem{barik2020mass}
P.~K. Barik, A.~K. Giri, and P.~Lauren{\c{c}}ot, ``Mass-conserving solutions to the smoluchowski coagulation equation with singular kernel,'' {\em Proceedings of the Royal Society of Edinburgh Section A: Mathematics}, vol.~150, no.~4, pp.~1805--1825, 2020.

\bibitem{kostoglou2009sectional}
M.~Kostoglou and A.~Karabelas, ``On sectional techniques for the solution of the breakage equation,'' {\em Computers \& Chemical Engineering}, vol.~33, no.~1, pp.~112--121, 2009.

\bibitem{kumar2015development}
J.~Kumar, J.~Saha, and E.~Tsotsas, ``Development and convergence analysis of a finite volume scheme for solving breakage equation,'' {\em SIAM Journal on Numerical Analysis}, vol.~53, no.~4, pp.~1672--1689, 2015.

\bibitem{Kumar2014convergence}
R.~Kumar, J.~Kumar, and G.~Warnecke, ``Convergence analysis of a finite volume scheme for solving non-linear aggregation-breakge population balance equations,'' {\em Kinetic and Related Models}, vol.~7, no.~4, pp.~713--737, 2014.

\bibitem{attarakih2009solution}
M.~M. Attarakih, C.~Drumm, and H.-J. Bart, ``Solution of the population balance equation using the sectional quadrature method of moments (sqmom),'' {\em Chemical Engineering Science}, vol.~64, no.~4, pp.~742--752, 2009.

\bibitem{ahmed2013stabilized}
N.~Ahmed, G.~Matthies, and L.~Tobiska, ``Stabilized finite element discretization applied to an operator-splitting method of population balance equations,'' {\em Applied Numerical Mathematics}, vol.~70, pp.~58--79, 2013.

\bibitem{cheng1990kinetics}
Z.~Cheng and S.~Redner, ``Kinetics of fragmentation,'' {\em Journal of Physics A: Mathematical and General}, vol.~23, no.~7, p.~1233, 1990.

\bibitem{ernst2007nonlinear}
M.~H. Ernst and I.~Pagonabarraga, ``The nonlinear fragmentation equation,'' {\em Journal of Physics A: Mathematical and Theoretical}, vol.~40, no.~17, p.~F331, 2007.

\bibitem{das2020approximate}
A.~Das, J.~Kumar, M.~Dosta, and S.~Heinrich, ``On the approximate solution and modeling of the kernel of nonlinear breakage population balance equation,'' {\em SIAM journal on scientific computing}, vol.~42, no.~6, pp.~B1570--B1598, 2020.

\bibitem{giri2021existence}
A.~K. Giri and P.~Lauren{\c{c}}ot, ``Existence and nonexistence for the collision-induced breakage equation,'' {\em SIAM Journal on Mathematical Analysis}, vol.~53, no.~4, pp.~4605--4636, 2021.

\bibitem{giri2021weak}
A.~K. Giri and P.~Lauren{\c{c}}ot, ``Weak solutions to the collision-induced breakage equation with dominating coagulation,'' {\em Journal of Differential Equations}, vol.~280, pp.~690--729, 2021.

\bibitem{ziff1985kinetics}
R.~M. Ziff and E.~McGrady, ``The kinetics of cluster fragmentation and depolymerisation,'' {\em Journal of Physics A: Mathematical and General}, vol.~18, no.~15, p.~3027, 1985.

\bibitem{mcgrady1987shattering}
E.~McGrady and R.~M. Ziff, ``Shattering transition in fragmentation,'' {\em Physical Review Letters}, vol.~58, no.~9, p.~892, 1987.

\bibitem{paul2023moments}
J.~Paul, A.~Das, and J.~Kumar, ``Moments preserving finite volume approximations for the non-linear collisional fragmentation model,'' {\em Applied Mathematics and Computation}, vol.~436, p.~127494, 2023.

\bibitem{lombart2022fragmentation}
M.~Lombart, M.~Hutchison, and Y.-N. Lee, ``Fragmentation with discontinuous galerkin schemes: non-linear fragmentation,'' {\em Monthly Notices of the Royal Astronomical Society}, vol.~517, no.~2, pp.~2012--2027, 2022.

\bibitem{he2003homotopy}
J.-H. He, ``Homotopy perturbation method: a new nonlinear analytical technique,'' {\em Applied Mathematics and Computation}, vol.~135, no.~1, pp.~73--79, 2003.

\bibitem{el2005application}
M.~El-Shahed, ``Application of he's homotopy perturbation method to volterra's integro-differential equation,'' {\em International Journal of Nonlinear Sciences and Numerical Simulation}, vol.~6, no.~2, pp.~163--168, 2005.

\bibitem{kaur2019analytical}
G.~Kaur, R.~Singh, M.~Singh, J.~Kumar, and T.~Matsoukas, ``Analytical approach for solving population balances: a homotopy perturbation method,'' {\em Journal of Physics A: Mathematical and Theoretical}, vol.~52, no.~38, p.~385201, 2019.

\bibitem{jiao2002aftertreatment}
Y.~Jiao, Y.~Yamamoto, C.~Dang, and Y.~Hao, ``An aftertreatment technique for improving the accuracy of adomian's decomposition method,'' {\em Computers \& Mathematics with Applications}, vol.~43, no.~6-7, pp.~783--798, 2002.

\bibitem{singh2015adomian}
R.~Singh, J.~Saha, and J.~Kumar, ``Adomian decomposition method for solving fragmentation and aggregation population balance equations,'' {\em Journal of Applied Mathematics and Computing}, vol.~48, no.~1, pp.~265--292, 2015.

\bibitem{he1997new}
J.~He, ``A new approach to nonlinear partial differential equations,'' {\em Communications in Nonlinear Science and Numerical Simulation}, vol.~2, no.~4, pp.~230--235, 1997.

\bibitem{abdou2005variational}
M.~Abdou and A.~Soliman, ``Variational iteration method for solving burger's and coupled burger's equations,'' {\em Journal of computational and Applied Mathematics}, vol.~181, no.~2, pp.~245--251, 2005.

\bibitem{wazwaz2007comparison}
A.-M. Wazwaz, ``A comparison between the variational iteration method and adomian decomposition method,'' {\em Journal of Computational and Applied Mathematics}, vol.~207, no.~1, pp.~129--136, 2007.

\bibitem{odibat2010study}
Z.~M. Odibat, ``A study on the convergence of variational iteration method,'' {\em Mathematical and Computer Modelling}, vol.~51, no.~9-10, pp.~1181--1192, 2010.

\bibitem{hasseine2015analytical}
A.~Hasseine, Z.~Barhoum, M.~Attarakih, and H.-J. Bart, ``Analytical solutions of the particle breakage equation by the adomian decomposition and the variational iteration methods,'' {\em Advanced Powder Technology}, vol.~26, no.~1, pp.~105--112, 2015.

\bibitem{hasseine2017semi}
A.~Hasseine, M.~Attarakih, R.~Belarbi, and H.~J. Bart, ``On the semi-analytical solution of integro-partial differential equations,'' {\em Energy Procedia}, vol.~139, pp.~358--366, 2017.

\bibitem{odibat2020optimized}
Z.~Odibat, ``An optimized decomposition method for nonlinear ordinary and partial differential equations,'' {\em Physica A: Statistical Mechanics and its Applications}, vol.~541, p.~123323, 2020.

\bibitem{kaushik2023novel}
S.~Kaushik and R.~Kumar, ``A novel optimized decomposition method for smoluchowski's aggregation equation,'' {\em Journal of Computational and Applied Mathematics}, vol.~419, p.~114710, 2023.

\bibitem{stewart1989global}
I.~Stewart and E.~Meister, ``A global existence theorem for the general coagulation--fragmentation equation with unbounded kernels,'' {\em Mathematical Methods in the Applied Sciences}, vol.~11, no.~5, pp.~627--648, 1989.

\end{thebibliography}
\bibliographystyle{ieeetr}
				\end{document}